\documentclass[12pt]{amsart}

\pagestyle{plain}
\sloppy

\usepackage[all,matrix,arrow]{xy}
\usepackage{mathrsfs, amssymb, array, amsmath, upgreek, amsfonts}
\usepackage{amsthm,textcomp}
\usepackage[mathcal]{euscript}
\usepackage{float}

\usepackage{hyperref}
\usepackage{tabularx,multirow,makecell,longtable}

\makeatletter
\@addtoreset{equation}{section}
\makeatother

\textwidth=12.56cm
\textheight=21cm

\newcommand{\PP}{\mathbb P}

\newcommand{\QQ}{\mathbb Q}
\newcommand{\ZZ}{\mathbb Z}

\newcommand{\OOO}{\mathscr{O}}

\newcommand{\KKK}{{\mathscr{K}}}

\newcommand{\chit}{\chi_{\mathrm{top}}}
\newcommand{\Bim}{\operatorname{Bim}}

\newcommand{\Supp}{\operatorname{Supp}}

\newcommand{\Aut}{\operatorname{Aut}}

\newcommand{\cc}{\operatorname{c}}

\newcommand{\GL}{\operatorname{GL}}

\def \ge {\geqslant}
\def \le {\leqslant}

\theoremstyle{plain}
\newtheorem{theorem}[subsection]{Theorem}
\newtheorem{lemma}[subsection]{Lemma}
\newtheorem{proposition}[subsection]{Proposition}
\newtheorem{corollary}[subsection]{Corollary}

\theoremstyle{definition}

\newtheorem{remark}[subsection]{Remark}

\title{Bounded automorphism groups of compact complex surfaces}

\author{Yu.~G.~Prokhorov, \quad C.~A.~Shramov}

\address{\emph{Yuri Prokhorov}
\newline
\textnormal{Steklov Mathematical Institute of RAS,
8 Gubkina street, Moscow 119991, Russia.
}
\newline
\textnormal{\texttt{prokhoro@mi-ras.ru}}}

\address{\emph{Constantin Shramov}
\newline
\textnormal{Steklov Mathematical Institute of RAS,
8 Gubkina street, Moscow 119991, Russia.
}
\newline
\textnormal{\texttt{costya.shramov@gmail.com}}}

\thanks{This work was performed at the Steklov International Mathematical Center
and supported by the Ministry of Science and Higher Education of the
Russian Federation (agreement no. 075-15-2019-1614).}

\date{}

\begin{document}

\maketitle

\begin{abstract}
We classify compact complex surfaces whose groups of bimeromorphic selfmaps
have bounded finite subgroups. We also
prove that the stabilizer of a point in the automorphism group of a compact complex surface of zero Kodaira dimension,
as well as the stabilizer of a point in the automorphism group of an arbitrary compact K\"ahler manifold of non-negative
Kodaira dimension, always has bounded finite subgroups.
\end{abstract}

\tableofcontents

\section{Introduction}

One says that a group $\Gamma$ has  \emph{bounded finite subgroups}
if there exists a constant $B=B(\Gamma)$ such that every finite subgroup of $\Gamma$
has order at most $B$.
Otherwise one says that $\Gamma$ has  \emph{unbounded finite subgroups}.
In many interesting cases automorphism groups or groups of
birational selfmaps of algebraic varieties have bounded finite subgroups.
For instance, this is the case for non-uniruled varieties with vanishing irregularity
over fields of characteristic zero (see~\mbox{\cite[Theorem~1.8(i)]{ProkhorovShramov-Bir}});
for varieties over number fields (see~\mbox{\cite[Theorem~1.4]{ProkhorovShramov-Bir}} and~\mbox{\cite[Theorem~1.1]{Birkar}});
for non-trivial Severi--Brauer surfaces over fields of characteristic zero that contain all roots of unity (see~\mbox{\cite[Corollary~1.5]{ShramovVologodsky}}).

The main purpose of this paper is to prove the following assertion
that classifies compact complex surfaces whose groups of bimeromorphic
selfmaps have unbounded finite subgroups.

\begin{theorem}[{cf. \cite[Lemma~3.5]{Prokhorov-Shramov-3folds}}]
\label{theorem:BFS-for-surfaces}
Let $S$ be a compact complex surface of non-negative Kodaira dimension.
Suppose that the group~\mbox{$\Bim(S)$} of bimeromorphic selfmaps of $S$
has unbounded finite subgroups. Then~$S$ is bimeromorphic to a surface of one of the following types:
\begin{itemize}
\item a complex torus;

\item a bielliptic surface;

\item a Kodaira surface;

\item a surface of Kodaira dimension $1$.
\end{itemize}
Moreover, in the first three cases the group $\Bim(S)$ always has
unbounded finite subgroups. In the fourth case the group~\mbox{$\Bim(S)$}
has bounded finite subgroups if and only if this holds for its subgroup
that consists of all selfmaps preserving the fibers of the pluricanonical
fibration of~$S$.
\end{theorem}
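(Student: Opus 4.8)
The plan is to pass to a minimal model and argue according to the Kodaira dimension. Since $\kappa(S) \ge 0$, the surface $S$ has a unique minimal model $S_{\min}$, every bimeromorphic selfmap preserves it, and $\Bim(S) \cong \Aut(S_{\min})$; I may therefore replace $S$ by $S_{\min}$ and study the automorphism group of a minimal surface. Throughout I will use the elementary observation that boundedness of finite subgroups behaves well in short exact sequences $1 \to N \to G \to Q \to 1$: if both $N$ and $Q$ have bounded finite subgroups then so does $G$, since any finite $H \le G$ satisfies $|H| = |H \cap N|\cdot|H/(H\cap N)| \le B(N)\cdot B(Q)$; and conversely every subgroup of $G$ inherits a bound. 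By the Enriques--Kodaira classification the minimal surfaces with $\kappa \ge 0$ fall into surfaces of general type ($\kappa = 2$); the surfaces with $\kappa = 0$ (K3, Enriques, complex tori, bielliptic and Kodaira surfaces); and the properly elliptic surfaces ($\kappa = 1$). It remains to place each class correctly.

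On the bounded side I would treat three classes. If $\kappa(S) = 2$ then $\Aut(S)$ is finite, hence trivially bounded. If $S$ is a K3 or an Enriques surface, the action of $\Aut(S)$ on $H^2(S,\ZZ)$ has finite kernel $K$ (trivial for K3, finite for Enriques), and its image lies in the orthogonal group of a fixed lattice; since $\GL_n(\ZZ)$ has bounded finite subgroups by Minkowski's theorem, the exact sequence $1 \to K \to \Aut(S) \to \operatorname{O}(H^2(S,\ZZ))$ together with the observation above shows that $\Aut(S)$ is bounded. Thus K3, Enriques and general type surfaces are correctly excluded from the list.

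On the unbounded side --- complex tori, bielliptic surfaces and Kodaira surfaces --- the common feature is that $H^0(S, T_S) \ne 0$, equivalently that $\Aut^{\circ}(S)$ is positive-dimensional; this is precisely what separates these three classes from K3 and Enriques surfaces inside $\kappa = 0$. I would exhibit in $\Aut^{\circ}(S)$ a positive-dimensional compact complex torus: all translations on a complex torus, the descended elliptic-curve factor on a bielliptic surface, and the fibre torus of the principal-bundle structure on a primary Kodaira surface. The torsion points of such a torus form finite subgroups of unbounded order, so $\Aut(S)$, and hence $\Bim(S)$, has unbounded finite subgroups. The secondary Kodaira surfaces need a little more care, since one must ensure that enough of this torus descends to the finite quotient; this is what the cited \cite[Lemma~3.5]{Prokhorov-Shramov-3folds} provides.

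The remaining and most delicate case is $\kappa(S) = 1$. Here a sufficiently divisible pluricanonical system defines a canonical elliptic fibration $f \colon S \to B$ which every bimeromorphic selfmap must preserve, giving a homomorphism $\rho \colon \Bim(S) \to \Aut(B)$ whose kernel is exactly the subgroup of fibre-preserving maps. By the exact-sequence observation the theorem reduces to showing that $\operatorname{Im}(\rho)$ is finite, and this is the main obstacle. An element of $\operatorname{Im}(\rho)$ must permute the finitely many points of $B$ over which the fibre is multiple or singular, and it must satisfy $j \circ \sigma = j$ for the $j$-invariant function $j \colon B \to \PP^1$. The condition $\kappa(S) = 1$, via the canonical bundle formula, forces the relevant orbifold base to be hyperbolic: either $j$ is non-constant, so that the automorphisms of $B$ commuting with the finite map $j$ form a finite group, or $f$ is isotrivial, in which case $2g(B) - 2 + \sum_i (1 - 1/m_i) > 0$ and the orbifold $\bigl(B, \sum_i (1 - 1/m_i)[P_i]\bigr)$ has finite automorphism group by uniformization. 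In either case $\operatorname{Im}(\rho)$ is finite, and the claimed equivalence follows.
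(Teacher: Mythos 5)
Most of your outline tracks the paper's actual proof: the reduction to the minimal model with $\Bim(S)=\Aut(S)$, the exact-sequence bookkeeping, Minkowski's theorem applied to the cohomology representation for $K3$ and Enriques surfaces, finiteness of $\Aut(S)$ for $\varkappa(S)=2$, torsion in a positive-dimensional $\Aut^0$-torus for the unbounded classes (the paper simply cites \cite[Theorem~1.1(i)]{Shramov-Elliptic} for all three, so your deferral of the secondary Kodaira case to a citation is comparable), and the reduction of the $\varkappa(S)=1$ case to finiteness of the image $\Gamma=\operatorname{Im}(\rho)$ in $\Aut(B)$, which is exactly the paper's Proposition~\ref{proposition:elliptic-fibration-base-subgroup}. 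But your proof of that finiteness has a genuine gap in the isotrivial case. You claim that $\varkappa(S)=1$ forces $2g(B)-2+\sum_i(1-1/m_i)>0$. This drops a term from the canonical bundle formula: one has $\KKK_S\sim f^*\left(\KKK_B\otimes\mathcal L\right)\otimes\OOO_S\left(\sum(m_i-1)F_i\right)$ with $\deg\mathcal L=\chi(\OOO_S)\ge 0$, so $\varkappa(S)=1$ only yields $2g(B)-2+\chi(\OOO_S)+\sum_i(1-1/m_i)>0$, and when $\chi(\OOO_S)>0$ the orbifold base need not be hyperbolic. Concretely: let $E$ be an elliptic curve, $C$ a hyperelliptic curve of genus $g_C\ge 2$, and let $S$ be the relatively minimal model of $(E\times C)/\iota$, where $\iota$ acts by $-1$ on $E$ and by the hyperelliptic involution on $C$. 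This is an isotrivial properly elliptic fibration over $\PP^1$ with constant $j$, with $2g_C+2$ fibers of type $I_0^*$, \emph{no} multiple fibers, and $\chi(\OOO_S)=g_C+1\ge 3$; the orbifold $\left(\PP^1,\emptyset\right)$ is certainly not hyperbolic, so your uniformization step proves nothing here, even though the conclusion is true (the $\ge 6$ points under the $I_0^*$ fibers form a $\Gamma$-invariant set).

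What is missing is precisely the content of the paper's Section~\ref{section:elliptic-surfaces}. Via the behaviour of the $j$-function (Lemmas~\ref{lemma:j-poles} and~\ref{lemma:isotrivial-fibers}) one shows that a relatively minimal isotrivial elliptic fibration has no fibers of type $I_b$ or $I_b^*$ and that its multiple fibers are all of type ${}_mI_0$; consequently every singular non-multiple fiber has topological Euler characteristic between $1$ and $10$. Assuming $\Gamma$ infinite over $B\cong\PP^1$, there are at most two singular non-multiple fibers, whence $\chit(S)\le 20$ and Noether's formula pins down $\chi(\OOO_S)$; feeding this back into the canonical bundle formula (with the $\chi(\OOO_S)$ term included) forces enough multiple fibers that, together with the singular ones, $\Gamma$ preserves at least three points of $\PP^1$ --- a contradiction; the subcase with no singular non-multiple fibers and the case $g(B)=1$ (where any nonempty finite invariant set already forces $\Gamma$ finite, and a smooth $\phi$ would give $\varkappa(S)\le 0$) are handled by the same bookkeeping. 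Your non-isotrivial argument --- that $\{\sigma\in\Aut(B): j\circ\sigma=j\}$ is finite when $j$ is non-constant --- is correct and is a mild variant of the paper's contrapositive (an infinite orbit forces $j$ constant); so the proposal becomes a complete proof once the isotrivial case is redone along the lines above.
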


One of the main steps in the proof of Theorem~\ref{theorem:BFS-for-surfaces}
is the following assertion which, as we think, is interesting on its own.

\begin{proposition}\label{proposition:elliptic-fibration-base-subgroup}
Let $S$ be a minimal compact complex surface of Kodaira dimension~$1$.
Consider the pluricanonical fibration $\phi\colon S\to C$.
Then the image of the group $\Aut(S)$ in $\Aut(C)$ is finite.
\end{proposition}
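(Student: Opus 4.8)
The fibration $\phi$ is canonically attached to $S$, since it is the Iitaka fibration defined by the pluricanonical ring $\bigoplus_m H^0(S,mK_S)$; thus every $g\in\Aut(S)$ descends to some $\bar g\in\Aut(C)$ with $\phi\circ g=\bar g\circ\phi$, giving a homomorphism $\rho\colon\Aut(S)\to\Aut(C)$ whose image $G$ I must prove finite. If $g(C)\ge 2$ then $\Aut(C)$ is already finite, so the plan is to concentrate on the cases $g(C)=1$ and $C\cong\PP^1$, in which $\Aut(C)$ is positive-dimensional and the statement has content.

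The strategy is to exploit two pieces of data that are canonically attached to $\phi$ and hence preserved by $G$. The first is Kodaira's canonical bundle formula $K_S\qq\phi^*D$, where $D=K_C+L+\sum_i\tfrac{m_i-1}{m_i}P_i$ is a $\QQ$-divisor on $C$, the $P_i$ are the images of the multiple fibres $m_iF_i$, and $\deg L=\chi(\OOO_S)\ge 0$; under $\kappa(S)=1$ this $D$ has $\deg D>0$. Since $g^*K_S=K_S$ and $\phi^*$ is injective on $\Pic(C)\otimes\QQ$ (because $\phi_*\OOO_S=\OOO_C$), pulling the formula back along $\phi\circ g=\bar g\circ\phi$ gives $\bar g^*D\qq D$ for all $g$. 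The second is the functional $j$-invariant $J\colon C\to\PP^1$: as $\bar g$ maps each fibre isomorphically onto another fibre, $J\circ\bar g=J$.

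For $g(C)=1$ I would use the first gadget. Writing $\Aut(C)=C\rtimes F$ with $F$ finite, a translation $t_a$ can satisfy $t_a^*D\qq D$ only if $(\deg D)\,a=0$ in the group law of $C$; since $\deg D>0$, this confines $a$ to a finite set of torsion points, so the translation part of $G$ is finite, and $G$ maps to the finite group $F$ with finite kernel. Hence $G$ is finite.

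The case $C\cong\PP^1$ is the crux, because $\Pic(\PP^1)\otimes\QQ$ carries no information and the first gadget becomes vacuous. If $J$ is non-constant I would note that $G$ lies in $\{\sigma\in\Aut(\PP^1):J\circ\sigma=J\}$, a group of $\CC(J)$-automorphisms of $\CC(C)$ and therefore finite. The hard part will be the isotrivial case $J\equiv\mathrm{const}$, where the only remaining rigidity comes from the singular and multiple fibres. Let $\Sigma\subset\PP^1$ be the finite set over which the fibre is singular or multiple; as $\bar g$ preserves Kodaira types and multiplicities, $G$ preserves $\Sigma$, so it is enough to show $|\Sigma|\ge 3$, a subgroup of $\Aut(\PP^1)$ preserving at least three points being finite. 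Assuming $|\Sigma|\le 2$, I would use that the constant value of $j$ is finite, so there are no fibres $I_b$ with $b\ge 1$: every singular fibre is then additive (of Euler number at most $10$ and multiplicity one) and every multiple fibre is of type ${}_mI_0$, contributing $0$ to $\chit(S)$. Combining $\chit(S)=12\,\chi(\OOO_S)=\sum_{v\in\Sigma}e_v$ with $|\Sigma|\le 2$ forces $\chi(\OOO_S)\le 1$, and running these few configurations through $\deg D=-2+\chi(\OOO_S)+\sum_i\tfrac{m_i-1}{m_i}$ yields $\deg D\le 0$ in each case, contradicting $\kappa(S)=1$. Thus $|\Sigma|\ge 3$ and $G$ is finite. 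I expect this isotrivial subcase to be the main obstacle: the divisor-class argument collapses entirely, and one has to replace it by Kodaira's bookkeeping for singular and multiple fibres together with the canonical bundle formula.
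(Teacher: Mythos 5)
Your proposal is correct in substance, and on the crucial case $C\cong\PP^1$ it is essentially the paper's own argument (its Lemma~\ref{lemma:elliptic-fibration-base-subgroup-P1}): reduce to the isotrivial situation via the functional $j$-invariant, rule out fibers of types $I_b$, $I_b^*$ and ${}_mI_b$ with $b\ge 1$, and then combine Noether's formula with the canonical bundle formula to manufacture an invariant subset of $\PP^1$ with at least three points. Your packaging differs slightly --- you argue contrapositively from $|\Sigma|\le 2$ to $\deg D\le 0$, while the paper assumes the image infinite and produces the three-point invariant set directly, and you dispose of the non-isotrivial case by a function-field argument ($\Aut$ of the extension $\CC(t)/\CC(J)$ is finite) where the paper uses an infinite orbit to force $J$ constant --- but the bookkeeping is the same. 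One caveat: the exclusion of fibers ${}_mI_b$, $b\ge1$, which you cite as standard, is exactly the content of the paper's Lemmas~\ref{lemma:j-poles} and~\ref{lemma:isotrivial-fibers}, and the multiple-fiber half genuinely requires an argument (the paper reduces it to the non-multiple case via the logarithmic transform construction of \cite[\S\,III.10]{BHPV-2004}); a complete write-up should include or precisely reference this.

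Your genus-one case takes a genuinely different route, and as literally stated it has a flaw. The paper argues: if $\phi$ is not smooth, the image preserves a non-empty finite subset of the elliptic curve $C$ and is therefore finite; if $\phi$ is smooth, then $\chit(S)=0$, Noether gives $\chi(\OOO_S)=0$, and the canonical bundle formula forces $\varkappa(S)\le 0$, a contradiction --- so no divisor-class argument is needed. You instead invoke the theorem of the square, but the step ``$t_a^*D\qq D$ only if $(\deg D)\,a=0$'' is false: on a curve, $\QQ$-linear equivalence determines an integral divisor class only up to torsion in $\Pic^0(C)$, so from $t_a^*D\qq D$ you can conclude only that $a$ is a torsion point, and the torsion points form an infinite (dense) subgroup. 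The fix is immediate and stays within your framework: for $N=\operatorname{lcm}(m_i)$ the canonical bundle formula is an honest isomorphism $\KKK_S^{\otimes N}\cong\phi^*\OOO_C(ND)$ (since $\phi^*P_i=m_iF_i$), and injectivity of $\phi^*$ on $\Pic(C)$ --- via $\phi_*\OOO_S=\OOO_C$ and the projection formula --- yields genuine linear equivalence $\bar g^*(ND)\sim ND$; then the theorem of the square gives $N(\deg D)\,a=0$, confining $a$ to the finite group of $N\deg D$-torsion points, as you intended. With this repair your argument is complete, and it has the mild advantage of treating the genus-one case uniformly without the smooth/non-smooth dichotomy.
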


Also, in this paper we prove the following result.
For a complex manifold $X$ and a point $P\in X$,
by $\Aut(X;P)$ we denote the stabilizer of $P$ in the group~\mbox{$\Aut(X)$}.

\begin{proposition}
\label{proposition:stabilizer-kappa-0}
There exists a constant $B$ such that for every compact complex surface
$S$ of Kodaira dimension $0$, every point $P\in S$, and every finite subgroup~\mbox{$G\subset \Aut(S;P)$}
the order of the group $G$ is at most~$B$.
\end{proposition}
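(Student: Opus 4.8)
The plan is to reduce to the minimal model and then run a case analysis along the Enriques--Kodaira classification. Let $\pi\colon S\to S'$ be the contraction of $S$ onto its (unique) minimal model. Every automorphism of $S$ preserves the exceptional locus of $\pi$, hence descends to $S'$; and since $\pi$ is an isomorphism over a dense open subset, the induced homomorphism $\Aut(S)\to\Aut(S')$ is injective and sends $\Aut(S;P)$ into $\Aut(S';Q)$ with $Q=\pi(P)$. It therefore suffices to bound $|G|$ for a finite subgroup $G\subset\Aut(S';Q)$, uniformly in the minimal surface $S'$ of Kodaira dimension $0$ and in $Q\in S'$. By the classification $S'$ is a complex torus, a K3 surface, an Enriques surface, a bielliptic surface, or a Kodaira surface, so in particular its Betti numbers are universally bounded, with $b_2(S')\le 22$.

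I would split the argument according to whether $\Aut^0(S')$ is trivial. For K3 and Enriques surfaces one has $h^0(S',T_{S'})=0$, so $\Aut^0(S')$ is trivial and $\Aut(S')$ acts on $H^2(S',\ZZ)$ with finite kernel of bounded order (by the Torelli theorem for K3, and by the boundedness of the cohomologically trivial automorphisms for Enriques surfaces). Since $b_2(S')\le 22$, Minkowski's theorem on finite subgroups of $\GL_{n}(\ZZ)$ then bounds all finite subgroups of $\Aut(S')$, and the point $Q$ plays no role. The remaining surfaces---tori, bielliptic and Kodaira surfaces---are exactly those with $\Aut^0(S')\ne\{1\}$; here $\Aut^0(S')$ is a positive-dimensional complex torus $\mathcal{T}$ acting on $S'$ by translations, and it is this subgroup that produces the unbounded finite subgroups in Theorem~\ref{theorem:BFS-for-surfaces}. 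Fixing the point $Q$ is precisely what controls it.

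For such $S'$ and $G\subset\Aut(S';Q)$ I would use the exact sequence $1\to G\cap\mathcal{T}\to G\to\bar G\to 1$. The subgroup $G\cap\mathcal{T}$ consists of the elements of $\mathcal{T}$ fixing $Q$; as $\mathcal{T}$ acts with finite stabilizers (translations on a torus act freely, and on a bielliptic or Kodaira surface the stabilizer of a point is bounded by the order of the finite group used in the Bagnera--de Franchis type construction), this group is finite of uniformly bounded order. The quotient $\bar G$ maps to $\Aut(S')/\Aut^0(S')$, which by the theorem of Fujiki and Lieberman embeds, up to a finite kernel, into $\GL(H^{*}(S',\ZZ))$; a second application of Minkowski's bound, the rank being bounded, shows that $|\bar G|$ is uniformly bounded. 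Multiplying the two estimates bounds $|G|$, and together with the case $\Aut^0(S')=\{1\}$ this produces the universal constant $B$. A more local route to the same reduction is available: by Cartan's theorem $G$ can be linearized at $Q$, so that $G\hookrightarrow\GL(T_{Q}S')\cong\GL_2(\CC)$, whereupon Jordan's theorem reduces the problem to bounding the orders of the eigenvalues of $dg_Q$, whose product is governed by the one-dimensional pluricanonical representation of $G$ on $H^0(S',mK_{S'})$ with $m\mid 12$.

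The main obstacle is concentrated in the case $\Aut^0(S')\ne\{1\}$: one must verify, uniformly over tori, bielliptic surfaces and Kodaira surfaces, both that the stabilizer of a point inside the translation group $\mathcal{T}$ is finite of bounded order and that the cohomologically trivial automorphisms modulo $\Aut^0(S')$ form a uniformly bounded group. Once these two finiteness statements are made quantitative, the conclusion is a matter of assembling the resulting Minkowski and Jordan constants.
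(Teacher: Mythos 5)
Your overall skeleton---reduce to the minimal model via the equivariant morphism $\pi\colon S\to S'$, then run the Enriques--Kodaira classification---is the same as the paper's, and your treatment of the K3, Enriques, and torus cases matches Corollaries~\ref{corollary:K3} and~\ref{corollary:torus-stabilizer} (action on cohomology of bounded rank plus Minkowski, respectively the embedding of $\Aut(S;P)$ into $\GL_4(\ZZ)$). The genuine gap is in your handling of the surfaces with $\Aut^0(S')\neq\{1\}$, and it is concentrated in the Kodaira surfaces. These are non-K\"ahler, and both results you lean on there do not apply as stated: Fujiki's theorem that a non-trivial $\Aut^0$ is a complex torus (Theorem~\ref{theorem:Fujiki} in the paper) and the Fujiki--Lieberman-type statement that $\Aut(S')/\Aut^0(S')$ acts on $H^*(S',\ZZ)$ with finite kernel are theorems about compact K\"ahler manifolds (or class $\mathcal{C}$), and Kodaira surfaces lie outside that class. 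You yourself flag that the uniform boundedness of the cohomologically trivial automorphisms modulo $\Aut^0(S')$ ``must be verified'' --- but for Kodaira surfaces that verification \emph{is} the hard content of the proposition, and nothing in your sketch supplies it. The paper closes exactly this case by different, fibration-theoretic means: for a primary Kodaira surface the algebraic reduction $\phi\colon S\to C$ is $\Aut(S)$-equivariant, the image of $\Aut(S)$ in $\Aut(C)$ has order at most~$6$ (Lemma~\ref{lemma:primary-Kodaira-Aut}), and --- the key point --- every non-trivial finite subgroup of $\Aut(S)_\phi$ acts on $S$ \emph{without fixed points} (Lemma~\ref{lemma:primary-Kodaira-fiber}, proved by producing a curve of fixed points that would dominate $C$ and force $S$ to be projective, contradicting Lemma~\ref{lemma:two-intersecting-curves}). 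Hence for $G\subset\Aut(S;P)$ the subgroup $G\cap\Aut(S)_\phi$ is trivial and $|G|\le 6$; the secondary case reduces to the primary one via the canonical cover of degree at most~$6$.

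Your fallback ``local route'' does not repair this: Cartan linearization gives $G\hookrightarrow\GL\big(T_P(S')\big)\cong\GL_2(\CC)$, but finite abelian subgroups of $\GL_2(\CC)$ have unbounded order, so Jordan's theorem yields no bound by itself; and the one-dimensional pluricanonical representation controls only (a power of) $\det(dg_P)$, and only when a pluricanonical section is non-vanishing at $P$, so it cannot exclude eigenvalue pairs of the form $(\zeta,\zeta^{-1})$ with $\zeta$ of arbitrarily large order. Ruling those out requires global input, which is precisely what the fixed-point-free lemma for Kodaira surfaces and the integral-lattice embedding for tori provide in the paper. A minor further point: for bielliptic surfaces your route through $\Aut^0$ and Lieberman's theorem is legitimate (they are K\"ahler), but the paper gets the explicit bound~$36$ more elementarily by embedding $\Aut(S;P)$ into $\Aut\big(C;\phi(P)\big)\times\Aut(F;P)$ for the Albanese fibration, using the faithfulness of the action on the fiber (Lemma~\ref{lemma:faithful-on-fiber} and Corollary~\ref{corollary:bielliptic-stabilizer}).
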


\begin{remark}
Let $S$ be a compact complex surface of Kodaira dimension~$1$, and let $P$ be a point on $S$.
We do not know whether the group~\mbox{$\Aut(S;P)$} always has bounded finite subgroups.
It is easy to show that this holds at least when the fiber of the pluricanonical fibration of $S$
passing through $P$ is non-singular and reduced at $P$.
\end{remark}

Under an additional assumption
that a manifold is K\"ahler we can prove boundedness of finite subgroups
for stabilizers of points in the automorphism groups in arbitrary dimension.

\begin{theorem}\label{theorem:Kahler-stabilizer}
Let $X$ be a compact K\"ahler manifold of non-negative Kodaira dimension, and let $P$
be a point on $X$. Then the group~\mbox{$\Aut(X;P)$} has bounded finite subgroups.
\end{theorem}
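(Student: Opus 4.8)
The plan is to filter a finite subgroup $G\subset\Aut(X;P)$ through its actions on cohomology and on the identity component of the automorphism group, bounding the contribution of each layer separately. Let $\Lambda$ be the lattice $H^2(X,\ZZ)$ modulo torsion, of rank $N$, and let $K\subset\Aut(X)$ be the kernel of the induced homomorphism $\Aut(X)\to\GL(\Lambda)\cong\GL_N(\ZZ)$. Since finite subgroups of $\GL_N(\ZZ)$ have bounded order by Minkowski's theorem, the index $[G:G\cap K]$ is bounded by a constant depending only on $N$. Hence it is enough to bound the orders $|G\cap K|$.

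Next I would descend to the identity component $\Aut^{0}(X)\subset\Aut(X)$. Being connected, $\Aut^{0}(X)$ acts trivially on the discrete group $\Lambda$, so $\Aut^{0}(X)\subseteq K$; conversely every element of $K$ fixes the class $[\omega]$ of a chosen K\"ahler form, so $K\subseteq\Aut_{\omega}(X)$, the group of automorphisms preserving $[\omega]$. By the Fujiki--Lieberman theorem $\Aut_{\omega}(X)$ has finitely many connected components, i.e.\ $\Aut_{\omega}(X)/\Aut^{0}(X)$ is finite, and therefore so is $F:=K/\Aut^{0}(X)$. As $(G\cap K)/(G\cap\Aut^{0}(X))$ embeds into $F$, the problem reduces to bounding $|G\cap\Aut^{0}(X)|$.

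It remains to bound the finite subgroups of $\Aut^{0}(X)$ that fix $P$, and this is where the hypothesis $\kappa(X)\ge 0$ enters decisively. Since $X$ is compact K\"ahler of non-negative Kodaira dimension, $\Aut^{0}(X)$ is a complex torus $T$; in particular it is compact. Every element of $G\cap\Aut^{0}(X)$ fixes $P$, so $G\cap\Aut^{0}(X)$ lies in the stabilizer $T_{P}=\{t\in T:t(P)=P\}$. The Lie algebra of $T_{P}$ is the space of holomorphic vector fields on $X$ vanishing at $P$; but on a compact K\"ahler manifold with $\kappa(X)\ge 0$ every holomorphic vector field is nowhere vanishing, so this Lie algebra is zero. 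Thus $T_{P}$ is a discrete, hence finite, subgroup of the compact group $T$, and $|G\cap\Aut^{0}(X)|\le|T_{P}|$. Combining the three bounds gives $|G|\le C(X)$ for a constant depending only on $X$, as required.

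I expect the main obstacle to be the geometric input of the last paragraph: the structure of $\Aut^{0}(X)$ as a complex torus and, above all, the nonvanishing of holomorphic vector fields (equivalently, the finiteness of point stabilizers in $\Aut^{0}(X)$) for manifolds of non-negative Kodaira dimension. This is the only place where positivity of a plurigenus is genuinely used, and establishing it rests on controlling the behaviour of pluricanonical forms under the flow generated by a holomorphic vector field.
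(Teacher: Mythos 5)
Your outer reduction is sound, and it is in fact a transparent unwinding of the result the paper simply cites: where the paper invokes Kim's lemma that $\Aut(X)/\Aut^0(X)$ has bounded finite subgroups, you re-derive this by combining Minkowski's theorem for the action on $H^2(X,\ZZ)$ modulo torsion with the Fujiki--Lieberman theorem that the group $\Aut_\omega(X)$ of automorphisms fixing a K\"ahler class has finitely many components. Both routes are correct, and yours is more self-contained. From that point on your skeleton coincides with the paper's: both invoke Fujiki's theorem that $\Aut^0(X)$ is a complex torus $T$ when $\varkappa(X)\ge 0$, and both reduce the problem to showing that the stabilizer $T_P=\Aut^0(X)\cap\Aut(X;P)$ is finite. (A side remark: the paper additionally obtains a bound on $|\Aut^0(X;P)|$ uniform in $P$ via an incidence-variety argument; your bound $|T_P|$ depends on $P$, which suffices for the theorem as stated.)

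The one genuine gap is your key claim that every holomorphic vector field on a compact K\"ahler manifold with $\varkappa(X)\ge 0$ is nowhere vanishing: you assert it, correctly flag it as the crux, and sketch a strategy via the behaviour of pluricanonical forms under the flow, but you never carry it out --- and that strategy is not routine, since invariance of pluricanonical sections under the flow does not by itself yield a contradiction at a zero of the field. The claim is true, and with what you already have it closes in two lines, which is exactly how the paper argues (its rigidity lemma): if $0\neq v\in \mathrm{Lie}(T)=H^0(X,T_X)$ vanished at $P$, the closure of the one-parameter subgroup $\{\exp(tv)\}$ would be a positive-dimensional subtorus $T'\subseteq T$ fixing $P$; but by the rigidity theorem a complex torus acting non-trivially on a compact complex manifold has no fixed points at all, since the action morphism $\Psi\colon T'\times X\to X$ would contract $T'\times\{P\}$ to a point while $\Psi(T'\times\{Q\})$ is positive-dimensional for typical $Q$. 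Alternatively, linearizing the $T'$-action at the fixed point $P$ gives a faithful holomorphic representation $T'\to\GL\big(T_P(X)\big)$, which is impossible for a non-trivial compact complex torus. Either patch makes your argument complete; as it stands, the decisive geometric step is stated but not proved.
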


We will prove one more assertion whose structure is similar to
that of Proposition~\ref{proposition:stabilizer-kappa-0};
it completes some of the results of the paper~\cite{ProkhorovShramov-CCS}.

\begin{proposition}[{cf. \cite[Corollary~8.10]{ProkhorovShramov-CCS}}]
\label{proposition:uniformly-Jordan}
There exists a constant~$J$ such that for every compact complex surface $S$
of Kodaira dimension~$0$ and every finite subgroup~\mbox{$G\subset \Bim(S)$}
there is an abelian subgroup of index at most~$J$ in~$G$.
\end{proposition}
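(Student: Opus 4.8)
The plan is to reduce the statement to a question about biregular automorphism groups, and then to run through the Enriques--Kodaira classification, controlling everything by a single Minkowski bound in an integral lattice of uniformly bounded rank. First I would use that a minimal compact complex surface of non-negative Kodaira dimension is the unique minimal model in its bimeromorphic class and that every bimeromorphic selfmap of such a surface is biregular. Hence for any $S$ of Kodaira dimension $0$ with minimal model $S_0$ one has a canonical isomorphism $\Bim(S)\cong\Aut(S_0)$, and it is enough to produce a single $J$ that serves as a Jordan constant for $\Aut(S_0)$ as $S_0$ ranges over the minimal surfaces of Kodaira dimension $0$: complex tori, K3 surfaces, Enriques surfaces, bielliptic surfaces, and primary and secondary Kodaira surfaces.

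Next I would split off the connected part of the automorphism group. In each of these cases the identity component $\Aut^{0}(S_0)$ is a commutative complex Lie group (it is trivial for K3 and Enriques surfaces, the full torus in the abelian case, and positive-dimensional in the bielliptic and Kodaira cases). Given a finite subgroup $G\subset\Aut(S_0)$, set $A=G\cap\Aut^{0}(S_0)$; then $A$ is a finite subgroup of a commutative group, hence abelian, and it is normal in $G$. So the problem reduces to bounding $|G/A|$ uniformly in $S_0$, where $G/A$ embeds into the component group $\pi_0(\Aut(S_0))$.

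The point of the reduction is that $\pi_0(\Aut(S_0))$ acts on the lattice $H^2(S_0,\ZZ)$, whose rank is the second Betti number, and the second Betti number of a surface of Kodaira dimension $0$ never exceeds $22$. If $N$ denotes the kernel of this action on $G/A$, then $(G/A)/N$ embeds into $\GL_{22}(\ZZ)$, so by Minkowski's theorem its order is at most a universal constant $M$. For K3 surfaces the action of $\Aut(S_0)$ on $H^2(S_0,\ZZ)$ is faithful and $\Aut^{0}(S_0)$ is trivial, so $N$ is trivial; for Enriques surfaces the group of numerically trivial automorphisms is bounded by an absolute constant; and the abelian case is handled directly by the faithful action of $\pi_0(\Aut(S_0))$ on $H_1(S_0,\ZZ)\cong\ZZ^{4}$, to which Minkowski again applies.

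The main obstacle, as I see it, is the uniform treatment of bielliptic surfaces and of the non-K\"ahler primary and secondary Kodaira surfaces, where the action on cohomology is genuinely non-faithful and no Torelli-type statement is available. For these classes I would use that each consists of finitely many deformation families with explicitly described automorphism groups, and argue that within every family the subgroup of cohomologically trivial automorphisms, taken modulo $\Aut^{0}(S_0)$, has order bounded independently of the chosen surface. Combining these finitely many bounds with the constant $M$ yields a single $J_0$ with $|G/A|\le J_0$, so that $A$ is an abelian subgroup of $G$ of index at most $J_0$; taking $J=J_0$ finishes the argument. The genuinely delicate feature is \emph{uniformity}: the Jordan property is already known for each individual group $\Bim(S)$, and what must be extracted is a constant independent of $S$, which is precisely what the fixed bound $b_2\le 22$ and the finiteness of the list of families provide.
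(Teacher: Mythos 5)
Your overall skeleton (pass to the minimal model, use $\Bim(S)=\Aut(S)$, split a finite $G$ as $A=G\cap\Aut^0(S_0)$ abelian normal plus a component-group part controlled by Minkowski acting on integral (co)homology) is a legitimate strategy, and several branches of it do work: the reduction to the minimal model is exactly the paper's first step; K3 and Enriques surfaces are handled by Minkowski on $H^*(S,\ZZ)$ plus faithfulness (resp.\ the Mukai--Namikawa bound on the kernel), which is the paper's Corollary~\ref{corollary:K3}; the torus case via the embedding of the point stabilizer into $\GL_4(\ZZ)$ is the paper's Corollary~\ref{corollary:torus-stabilizer}; and the bielliptic case can indeed be closed by citing the Bennett--Miranda tables, as the paper does. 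But there is a genuine gap where you yourself locate the ``main obstacle'': the Kodaira surfaces. Your plan there is to bound, uniformly within each of finitely many deformation families, the cohomologically trivial automorphisms modulo $\Aut^0$. This step is not established, and the cohomological framework is structurally too weak to establish it: a secondary Kodaira surface has $b_2=0$ and $b_1=1$, so $H^*(S,\QQ)$ is $4$-dimensional and the representation of $\Aut(S)$ on it can be (nearly) trivial --- essentially the entire finite group can be cohomologically trivial, so Minkowski sees nothing and the whole burden falls on the unproved claim. Moreover, ``finitely many deformation families'' does not by itself give uniformity: these families have continuous moduli and automorphism groups jump within them, so one needs an actual per-family argument, which is precisely the missing content. (A smaller unjustified assertion: that $\Aut^0(S_0)$ is commutative for Kodaira surfaces; this is true but requires an argument, e.g.\ identifying $\Aut^0$ with fiberwise translations of the algebraic reduction.)

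The paper closes the Kodaira case by geometry rather than cohomology, and this is the key difference. For a primary Kodaira surface with algebraic reduction $\phi\colon S\to C$, the image of $\Aut(S)$ in $\Aut(C)$ has order at most $6$ because no automorphism induces a translation on the elliptic base (Lemma~\ref{lemma:primary-Kodaira-Aut}); and every nontrivial finite subgroup of $\Aut(S)_\phi$ acts on $S$ \emph{without fixed points} (Lemma~\ref{lemma:primary-Kodaira-fiber}): a fixed point would produce a curve of fixed points dominating $C$, forcing $S$ to be projective by the intersection-theoretic Lemma~\ref{lemma:two-intersecting-curves} --- a contradiction. Combined with faithfulness of the action on a fiber (Lemma~\ref{lemma:faithful-on-fiber}), this shows $G\cap\Aut(S)_\phi$ acts freely on an elliptic fiber, hence by translations, hence is abelian of index at most $6$ in $G$ (Corollary~\ref{corollary:Kodaira-abelian}); the secondary case is lifted to the primary one through the canonical cover of degree at most $6$. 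To repair your proposal you would need to replace the ``cohomologically trivial part is bounded per family'' claim by an argument of this kind; as written, the Kodaira branch is asserted rather than proved.
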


In Section~\ref{section:preliminaries} we prove several auxiliary assertions.
In Section~\ref{section:elliptic-surfaces} we study automorphism groups of elliptic fibrations and prove Proposition~\ref{proposition:elliptic-fibration-base-subgroup}.
In Section~\ref{section:proof} we prove Theorem~\ref{theorem:BFS-for-surfaces}.
In Section~\ref{section:stabilizer} we prove Proposition~\ref{proposition:stabilizer-kappa-0}.
In Section~\ref{section:Kahler} we prove Theorem~\ref{theorem:Kahler-stabilizer}.
In Section~\ref{section:Jordan} we prove Proposition~\ref{proposition:uniformly-Jordan}.

\smallskip
We use the following notation and conventions.
A \emph{complex manifold} is a smooth irreducible complex space. A
\textit{morphism} is a holomorphic map of complex spaces; a
\textit{fibration} is a morphism with connected fibers of positive dimension.
A \emph{typical fiber} of a fibration $\phi\colon X\to Y$ is a fiber over a point
of some non-empty subset of the form~\mbox{$Y\setminus\Sigma$}, where $\Sigma$ is a closed
(analytic) subset in~$Y$.
By $\KKK_X$ we denote the canonical line bundle on a (compact) complex manifold $X$, and
by $\varkappa(X)$ we denote the Kodaira dimension of~$X$.
By $T_P(X)$ we denote the tangent space to a complex manifold $X$ at a point $P\in X$.
Given a complex manifold~$X$ and a morphism $\phi\colon X\to Y$ which is equivariant
with respect to the group $\Aut(X)$, we denote by $\Aut(X)_\phi$ the group that consists
of all automorphisms mapping every fiber of $\phi$ to itself.

\smallskip
We are grateful to F.\,Campana, S.\,Nemirovski, and W.\,Sawin
for useful discussions. Special thanks go to the referee for his remarks.

\section{Preliminaries}
\label{section:preliminaries}

In this section we prove some auxiliary results on compact complex surfaces.
The following assertion is well-known.

\begin{lemma}\label{lemma:two-intersecting-curves}
Let $S$ be a compact complex surface. Suppose that there are two
divisors~$C_1$ and~$C_2$ on $S$ such that
$C_1^2\ge 0$ and $C_1\cdot C_2>0$.
Then the surface $S$ is projective.
\end{lemma}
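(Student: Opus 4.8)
The plan is to reduce the two-divisor hypothesis to the existence of a single divisor with positive self-intersection, and then to invoke the classical criterion that a compact complex surface carrying such a divisor is projective. The reduction itself is the elementary part. I would set $D=nC_1+C_2$, so that
$$D^2=n^2C_1^2+2n\,(C_1\cdot C_2)+C_2^2.$$
If $C_1^2>0$, then $C_1$ already has positive self-intersection; if $C_1^2=0$, then $D^2=2n\,(C_1\cdot C_2)+C_2^2$ tends to $+\infty$ as $n\to\infty$ because $C_1\cdot C_2>0$. In either case $D^2>0$ for a suitable $n$, and since $(-D)^2=D^2$, it is enough to prove the following: a compact complex surface admitting a divisor $D$ with $D^2>0$ is projective.

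For surfaces, being projective is equivalent to being Moishezon, i.e.\ to having algebraic dimension equal to $2$, so it suffices to produce quadratically many meromorphic functions, for instance by showing that $h^0\bigl(S,\OOO_S(mD)\bigr)$ grows like a positive multiple of $m^2$ for one of the two divisors $\pm D$. The main tool is the Riemann--Roch formula
$$\chi\bigl(\OOO_S(mD)\bigr)=\chi(\OOO_S)+\tfrac{1}{2}\,mD\cdot(mD-\KKK_S)=\chi(\OOO_S)+\tfrac{m^2}{2}\,D^2-\tfrac{m}{2}\,D\cdot\KKK_S,$$
whose right-hand side grows like $\tfrac{m^2}{2}D^2$ and hence tends to $+\infty$. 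Combined with Serre duality $h^2\bigl(S,\OOO_S(mD)\bigr)=h^0\bigl(S,\OOO_S(\KKK_S-mD)\bigr)$, this yields
$$h^0\bigl(S,\OOO_S(mD)\bigr)+h^0\bigl(S,\OOO_S(\KKK_S-mD)\bigr)\ge\chi\bigl(\OOO_S(mD)\bigr),$$
so that the left-hand side already grows quadratically, and the same holds with $D$ replaced by $-D$.

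The crucial step, which I expect to be the main obstacle, is to show that the quadratic growth is concentrated in the $h^0$-term rather than in the $h^2$-term $h^0\bigl(S,\OOO_S(\KKK_S\mp mD)\bigr)$, for at least one choice of sign. Here I would use that $h^0(mD)$ and $h^0(-mD)$ cannot both be positive for $m\ge 1$, since this would force $mD\sim 0$ and hence $(mD)^2=0$, contradicting $D^2>0$; and that the classes $\KKK_S-mD$ and $\KKK_S+mD$ cannot both be effective once $m$ is large, since the product of two effective representatives would equal $\KKK_S^2-m^2D^2$, which tends to $-\infty$. Controlling the multiplicities of the common components forced by this negativity is the delicate point; once it is handled, one obtains $h^0\bigl(S,\OOO_S(mD)\bigr)\sim\tfrac{m^2}{2}D^2$, whence $S$ is Moishezon and therefore projective. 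Since the resulting statement---that a compact complex surface possessing a divisor of positive self-intersection is projective---is classical, one may alternatively simply quote it, which is precisely what makes the lemma well known.
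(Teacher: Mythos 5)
Your proposal is correct and matches the paper's proof: both take $D=nC_1+C_2$ with $n\gg 0$ so that $D^2>0$, and then invoke the classical criterion that a compact complex surface carrying a divisor (line bundle) of positive self-intersection is projective, which the paper simply cites as \cite[Theorem~IV.6.2]{BHPV-2004}. Your additional Riemann--Roch sketch of that criterion is optional supplementary material (with its delicate point about common components honestly flagged), and your closing remark that one may just quote the classical statement is exactly what the paper does.
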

\begin{proof}
For $n\gg 0$ one has
$$
(nC_1+C_2)^2=n^2C_1^2+2nC_1\cdot C_2+C_2^2>0,
$$
so that $S$ is projective by~\mbox{\cite[Theorem~IV.6.2]{BHPV-2004}}.
\end{proof}

Recall that a compact complex surface $S$ is called  \emph{minimal}
if it does not contain smooth rational curves with self-intersection~$-1$.
For every compact complex surface $S$ there exists a minimal surface~$S'$
bimeromorphic to $S$, see for instance~\mbox{\cite[Theorem~III.4.5]{BHPV-2004}}.
Furthermore, if~\mbox{$\varkappa(S)\ge 0$}, then the minimal surface $S'$
is unique, see~\mbox{\cite[Proposition~III.4.6]{BHPV-2004}}; in this case there is a bimeromorphic
morphism~\mbox{$\pi\colon S\to S'$}.
Recall also that there exists a Kodaira--Enriques classification
of minimal compact complex surfaces, see~\mbox{\cite[Chapter~VI]{BHPV-2004}}.

\begin{lemma}[{see for instance \cite[Proposition~3.5]{ProkhorovShramov-CCS}}]
\label{lemma:Bir-vs-Aut}
Let~$S$ be a non-ruled minimal compact complex surface. Then~\mbox{$\Bim(S)=\Aut(S)$}.
\end{lemma}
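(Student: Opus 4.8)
The plan is to show that every element of $\Bim(S)$ is in fact a biholomorphic automorphism. Since $S$ is non-ruled, it is not bimeromorphic to a rational or to a ruled surface, and by the Kodaira--Enriques classification \cite[Chapter~VI]{BHPV-2004} the minimal surface $S$ is the unique minimal model in its bimeromorphic class. The point I want to exploit is that this rigidity leaves no room for indeterminacy of a selfmap. Concretely, it suffices to prove that every bimeromorphic selfmap of $S$ is a morphism: applying this to both $f$ and $f^{-1}$ for a given $f\in\Bim(S)$ then exhibits $f$ as a bimeromorphic morphism with holomorphic inverse, hence as a biholomorphism lying in $\Aut(S)$.

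So fix $f\in\Bim(S)$ and resolve its indeterminacy: there is a smooth compact complex surface $\tilde S$ with bimeromorphic morphisms $p,q\colon\tilde S\to S$, each a composition of blow-ups of points, such that $f=q\circ p^{-1}$. I would argue by induction on the number of $p$-exceptional curves. If $p$ is an isomorphism, then $f=q\circ p^{-1}$ is already a morphism. Otherwise the exceptional locus of $p$ contains an irreducible curve $L$ with $L^2=-1$, and by adjunction $\KKK_{\tilde S}\cdot L=-1$. If $L$ is also contracted by $q$, then, being a $(-1)$-curve contracted by both morphisms, it can be blown down by $c\colon\tilde S\to\tilde S_1$, and both $p$ and $q$ factor through $c$; writing $p=p_1\circ c$ and $q=q_1\circ c$ we get $f=q_1\circ p_1^{-1}$ with $p_1$ having one fewer exceptional curve, and the induction applies. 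If instead $L$ is not contracted by $q$, then $\Gamma:=q(L)$ is a rational curve on $S$ with $q_*L=\Gamma$; writing $\KKK_{\tilde S}=q^*\KKK_S+E$ with $E$ an effective divisor supported on the $q$-exceptional locus and intersecting with $L$, the projection formula gives
$$
-1=\KKK_{\tilde S}\cdot L=\KKK_S\cdot\Gamma+E\cdot L .
$$
Here $E\cdot L\ge 0$ because $L$ is not a component of the effective divisor $E$, while $\KKK_S\cdot\Gamma\ge 0$ because $\KKK_S$ is nef, $S$ being minimal of non-negative Kodaira dimension. This forces $-1\ge 0$, a contradiction, so this second case never occurs and the induction drives $p$ down to an isomorphism.

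The main obstacle is that the displayed numerical step relies on the nef-ness of $\KKK_S$, which is available only when $\varkappa(S)\ge 0$. For the remaining non-ruled surfaces, namely those of class VII, the canonical bundle is not nef and this reasoning breaks down. For these I would not attempt the elementary $(-1)$-curve argument but instead invoke directly the structural fact that a minimal non-ruled surface is the unique minimal model in its class and that any bimeromorphic map between minimal non-ruled surfaces is a biholomorphism (cf. \cite[Proposition~III.4.6]{BHPV-2004} and \cite[Proposition~3.5]{ProkhorovShramov-CCS}); applied to $f\colon S\dashrightarrow S$ this yields $f\in\Aut(S)$. Thus I would present the $(-1)$-curve computation as the transparent main case and reduce the class VII case to this uniqueness statement.
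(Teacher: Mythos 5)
The paper itself contains no proof of this lemma: it is stated with a pointer to \cite[Proposition~3.5]{ProkhorovShramov-CCS}, so there is no internal argument to compare against, and your proposal should be judged as supplying the standard argument behind that citation. The part you actually argue is correct and is the classical uniqueness-of-minimal-models proof: resolve $f=q\circ p^{-1}$, induct on the number of $p$-exceptional curves by factoring both $p$ and $q$ through the blow-down of a common $(-1)$-curve, and exclude the case where the last exceptional curve $L$ survives under $q$ via $-1=\KKK_{\tilde S}\cdot L=\KKK_S\cdot\Gamma+E\cdot L\ge 0$. The key input, nefness of $\KKK_S$ for a minimal surface with $\varkappa(S)\ge 0$, does hold in the non-K\"ahler setting as well, and can even be seen without classification: some $m\KKK_S$ is effective, and a curve $C$ with $\KKK_S\cdot C<0$ would either be a component of that effective divisor with $C^2<0$, whence a $(-1)$-curve by adjunction (contradicting minimality), or would meet it non-negatively. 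Two small precisions you should add: take $L$ to be the exceptional curve of the \emph{last} blow-up in a factorization of $p$, so that $L$ is a smooth rational curve and adjunction really gives $\KKK_{\tilde S}\cdot L=-1$ (an arbitrary irreducible curve with $L^2=-1$ in the exceptional locus need not have arithmetic genus $0$); and note that $q_*L=\Gamma$ with multiplicity one because $L\not\subset\Exc(q)$, so the projection formula reads as you wrote it.

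The one genuine weakness is your final paragraph: the statement you invoke for class VII surfaces --- that any bimeromorphic map between minimal non-ruled surfaces is biholomorphic --- \emph{is} the lemma being proved, applied to $f\colon S\dashrightarrow S$, so as a self-contained argument that case is circular; it is a citation, not a proof (and \cite[Proposition~III.4.6]{BHPV-2004}, as used elsewhere in this paper, covers only $\varkappa(S)\ge 0$, so the class VII case reduces to citing precisely \cite[Proposition~3.5]{ProkhorovShramov-CCS}, the source the lemma is attributed to). That said, this is exactly the level at which the paper itself treats the whole lemma, and every application of the lemma in this paper is to surfaces with $\varkappa(S)\ge 0$, so your proposal in fact proves more than the paper records. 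If you want full generality, the class VII case needs a genuine separate argument; note only that the subcase $b_2(S)=0$ is immediate, since then the intersection form on $H^2(S,\QQ)$ vanishes and the displayed relation $-1=\KKK_S\cdot\Gamma+E\cdot L$ is already contradictory, while $b_2(S)>0$ requires finer information about curves on minimal class VII surfaces, for which the cited reference should be consulted.
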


\begin{lemma}\label{lemma:faithful-on-fiber}
Let $S$ be a complex manifold, and let $\phi\colon S\to C$ be a morphism that is equivariant
with respect to the group $\Aut(S)$. Let
$$
F=\phi^*(c),\quad c\in C,
$$
be a fiber of the morphism $\phi$.
Suppose that there is an irreducible component $F_1\subset \Supp(F)$ of multiplicity~$1$ in $F$.
Then every finite subgroup of $\Aut(S)_{\phi}$ acts faithfully on $F$.
\end{lemma}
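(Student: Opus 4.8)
The plan is to prove the contrapositive at the level of a single element: I will show that if a finite-order element $g\in\Aut(S)_{\phi}$ restricts to the identity on $F$, then $g=\mathrm{id}_{S}$. Applied to an arbitrary finite subgroup $G\subset\Aut(S)_{\phi}$, this says that the restriction homomorphism $G\to\Aut(F)$ has trivial kernel, which is exactly faithfulness. The entire argument will be local, carried out near a general point of the multiplicity-one component $F_{1}$, where the geometry is as simple as possible.

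First I would pick a general point $P\in F_{1}$. Since $F_{1}$ occurs with multiplicity $1$ in $F=\phi^{*}(c)$, at such a $P$ the component $F_{1}$ is smooth, meets no other component of $\Supp(F)$, and the divisor $F$ is reduced there. Writing $f=\phi^{*}t$ for the pullback of a local coordinate $t$ on $C$ vanishing at $c$, we then have $f=u\cdot\ell$ near $P$, where $\ell$ is a reduced local equation of $F_{1}$ (so $\mathrm{d}\ell|_{P}\neq 0$) and $u$ is a unit; hence $\mathrm{d}f|_{P}=u(P)\,\mathrm{d}\ell|_{P}\neq 0$, so $\phi$ is a submersion at $P$. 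Consequently I may choose local holomorphic coordinates $(z_{1},\dots,z_{n})$ on $S$ centered at $P$ in which $\phi$ is given by $z_{1}$ and $F$ is cut out by $z_{1}=0$.

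The heart of the proof is the computation of the differential $\mathrm{d}g_{P}$. Because $g\in\Aut(S)_{\phi}$ we have $\phi\comp g=\phi$, which forces the first coordinate of $g$ to satisfy $g^{*}z_{1}=z_{1}$; and because $g$ fixes $F$ pointwise we have $g(0,z_{2},\dots,z_{n})=(0,z_{2},\dots,z_{n})$. Differentiating these relations at $P$ shows that the Jacobian of $g$ at $P$ equals the identity except possibly for the entries $\partial g_{i}/\partial z_{1}$ with $i\ge 2$; in other words $\mathrm{d}g_{P}$ is a lower triangular unipotent matrix, so all of its eigenvalues equal $1$. On the other hand $g$ has finite order, whence $\mathrm{d}g_{P}$ has finite order and is diagonalizable. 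A diagonalizable unipotent matrix is the identity, so $\mathrm{d}g_{P}=\mathrm{id}$.

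Finally I would invoke Cartan's linearization theorem: a biholomorphism of finite order fixing $P$ is, in suitable local coordinates near $P$, equal to its differential $\mathrm{d}g_{P}$. Since $\mathrm{d}g_{P}=\mathrm{id}$, the map $g$ coincides with the identity on a neighborhood of $P$, and therefore $g=\mathrm{id}_{S}$ on all of $S$ by analytic continuation, $S$ being connected. The main obstacle is the Jacobian computation, and it is exactly there that the multiplicity-one hypothesis is indispensable: without it $\phi$ would fail to be a submersion along $F_{1}$, and an element acting as a nontrivial root of unity transverse to the reduced fiber could fix $F$ pointwise while being nontrivial (as for $z_{1}\mapsto -z_{1}$ along a fiber of multiplicity two).
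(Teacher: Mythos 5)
Your proof is correct and follows essentially the same route as the paper: both arguments pick a point of the multiplicity-one component $F_1$ at which $\phi$ is a submersion, use $\phi$-equivariance to see that the tangent action of the element is trivial on the kernel and cokernel of $d\phi$ (your unipotent-plus-finite-order step makes explicit what the paper leaves implicit), and conclude via the faithfulness of the linearized action at a fixed point (the paper cites Akhiezer and \cite[Corollary~4.2]{ProkhorovShramov-CCS} for exactly the Cartan--Bochner linearization you invoke). The only differences are cosmetic: you argue element-by-element in explicit coordinates rather than representation-theoretically for the whole subgroup, and your closing remark about the fiber of multiplicity two correctly identifies where the hypothesis is used.
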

\begin{proof}
Suppose that the induced action on~$F$ of some non-trivial finite subgroup $G\subset\Aut(S)_\phi$
is trivial; in particular, $G$ preserves the irreducible component $F_1$
and acts trivially on it. Choose a point~\mbox{$P\in F_{1}$} such that the morphism $\phi$ is smooth at~$P$.
The group $G$ acts non-trivially in the tangent space $T_P(S)$,
see for instance~\mbox{\cite[\S2.2]{Akhiezer}} or~\mbox{\cite[Corollary~4.2]{ProkhorovShramov-CCS}}.
On the other hand, $G$ acts trivially on the subspace~\mbox{$T_P(F_{1})\subset T_P(S)$}.
Since the morphism $\phi$ is smooth at~$P$, the differential
$$
d\phi\colon T_P(S)\longrightarrow T_{\phi(P)}(C)
$$
is a surjective linear map whose kernel is identified with $T_P(F_{1})$.
Moreover, this map is $G$-equivariant, where the action of $G$ on $C$ is taken to be trivial.
Since $G$ acts trivially on the tangent space
$T_{\phi(P)}(C)$, we conclude that the action of $G$ on the tangent space $T_P(S)$ is trivial as well.
The obtained contradiction shows that the action of $G$ on $F$ cannot be trivial.
\end{proof}

Recall that for a primary Kodaira surface the algebraic reduction
gives a morphism onto an elliptic curve, and all of its fibers are elliptic curves.
This morphism is equivariant with respect to the automorphism group of the surface.
For a secondary Kodaira surface the algebraic reduction is a morphism
to a rational curve whose typical fiber is an elliptic curve.

\begin{lemma}\label{lemma:primary-Kodaira-Aut}
Let $S$ be a primary (respectively, secondary) Kodaira surface, and let~\mbox{$\phi\colon S\to C$}
be its algebraic reduction. Then the image~$\Gamma$ of the group~\mbox{$\Aut(S)$} in $\Aut(C)$ has order at most $6$ (respectively,~$24$).
\end{lemma}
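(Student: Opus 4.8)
The plan is to analyse the homomorphism $\Aut(S)\to\Aut(C)$ coming from the equivariance of the algebraic reduction $\phi$, and to bound its image $\Gamma$ by separating the ``rotational'' and ``translational'' contributions in the two geometric situations described above. A useful first reduction in the primary case is to note that, since $\KKK_S$ is trivial, one has $h^0(S,\TTT_S)=h^0(S,\Omega^1_S)=1$, and the unique holomorphic vector field on $S$ is vertical for $\phi$; hence the identity component $\Aut^0(S)$ acts trivially on $C$, and $\Gamma$ is a quotient of the discrete group $\pi_0(\Aut(S))$. So the whole problem is to bound a discrete group of automorphisms of the base.

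For the primary case, where $C$ is an elliptic curve and $\phi$ is a holomorphic principal bundle of elliptic curves, I would first control the rotational part. The group $\Aut(S)$ acts on the one-dimensional space $H^0(S,\Omega^1_S)=\phi^*H^0(C,\Omega^1_C)$ through a character $\chi\colon\Aut(S)\to\CC^*$, and for $\bar g\in\Gamma$ the value of $\chi$ is (up to inversion) the multiplier of $\bar g$ on the invariant $1$-form of $C$. Translations have multiplier $1$, whereas the multipliers of the automorphisms of $C$ fixing a point form a cyclic group of order $2$, $4$ or $6$ according to the value of $j(C)$. Thus $\chi$ bounds the image of $\Gamma$ in $\Aut(C)/C$ by $6$. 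It then remains to bound the translational part $\Gamma\cap C$, and here I would use that the bundle $\phi$ is topologically non-trivial --- this is exactly what makes $S$ non-K\"ahler: its class has non-zero image in $H^2(C,\Lambda)$, where $\Lambda$ is the fibre lattice. A translation $t_a$ of $C$ lifts to $\Aut(S)$ only if $t_a^*\phi\cong\phi$; since translations act trivially on $H^0(C,\Omega^1_C)$ and on $H^2(C,\Lambda)$ but shift the bundle class by an amount depending linearly and non-trivially on $a$ through the non-zero topological invariant, the set of such $a$ is the kernel of a non-trivial homomorphism out of $C$, hence a finite subgroup. This gives finiteness of $\Gamma$, and together with the previous bound controls its order.

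For the secondary case the base $C$ is a rational curve, so $\Gamma\subset\Aut(C)=\PGL_2(\CC)$. Writing $S=\tilde S/\langle\psi\rangle$ with $\tilde S$ a primary Kodaira surface and $\psi$ of order $m\in\{2,3,4,6\}$ acting freely, the algebraic reduction $\tilde S\to\tilde C$ is $\psi$-equivariant, the induced $\bar\psi$ is an automorphism of the elliptic curve $\tilde C$ of order $m$ with $\tilde C/\langle\bar\psi\rangle=C$, and $\Gamma$ is induced by the normalizer of $\langle\bar\psi\rangle$ in $\Aut(\tilde C)$. Since the $\bar\psi$-invariant translations of $\tilde C$ together with the rotational part form a finite group, and $\Gamma$ permutes the finitely many branch points of $\tilde C\to C$, the image $\Gamma$ is a finite subgroup of $\PGL_2(\CC)$ of the polyhedral type determined by $m$; inspecting the finitely many possibilities bounds $|\Gamma|$ by $24$.

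The step I expect to be the main obstacle is the control of the translational part in the primary case. Unlike the rotational part, which is read off from the action on the holomorphic $1$-form, the translations are invisible to $\chi$, and ruling out an infinite discrete group of liftable base translations requires the genuine non-K\"ahlerness of $S$: one must verify that the shift of the bundle class under $t_a$ depends non-trivially on $a$, so that the liftable translations form a \emph{finite} subgroup of $C$.
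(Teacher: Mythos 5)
Your plan diverges from the paper's proof at exactly the point you flag as the main obstacle, and the gap there is real and fatal to the stated bound. In the primary case the paper does not prove finiteness of the translational part of $\Gamma$ at all: it invokes \cite[Corollary~3.3]{Shramov-Elliptic}, which asserts that $\Gamma$ contains \emph{no} nontrivial translations of $C$ whatsoever, whence $\Gamma$ injects into the quotient of $\Aut(C)$ by translations, a cyclic group of order at most $6$. Your argument, even if completed, yields only that the liftable translations form \emph{some} subgroup of $C$, with no control of its order; you would get $|\Gamma|\le 6\cdot|\Gamma\cap C|$, which is not the bound $6$ of the lemma. Worse, the finiteness mechanism itself does not stand up as stated. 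A translation $t_a$ acts trivially not only on $H^0(C,\Omega^1_C)$ and $H^2(C,\Lambda)$ but also on $H^1(C,\OOO_C)$, so the ``shift of the bundle class'' you invoke is invisible on all the cohomology groups you name; it lives in $H^1\big(C,\OOO_C(F)\big)$, whose continuous part is the quotient of $H^1(C,\OOO_C)\cong\CC$ by the image of $H^1(C,\Lambda)\cong\Lambda^{\oplus 2}$, and this image is in general a \emph{dense, non-closed} subgroup. Consequently ``$a\mapsto$ shift of the class'' is not a homomorphism from $C$ to a complex torus with finite kernel; the set of liftable $a$ is a priori only a countable subgroup. Even in the most favorable analogy --- the theta-group picture for a line bundle $L$ of degree $d$, where the translations with $t_a^*L\cong L$ form the finite group $K(L)$ --- the kernel has order $d^2$, which depends on the topological invariant of $S$ and is unbounded over the family of primary Kodaira surfaces. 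So this route cannot produce a uniform constant, let alone $6$; the external no-translations result is what the proof genuinely rests on, and you have not supplied a substitute for it.

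In the secondary case your skeleton is close to the paper's but under-executed at the decisive step. The paper observes that the algebraic reduction $\phi\colon S\to\PP^1$ has either $3$ or $4$ multiple fibers (from the proof of \cite[Lemma~7.1]{ProkhorovShramov-CCS}); since $\Gamma\subset\PGL_2(\CC)$ permutes these points and an element of $\PGL_2(\CC)$ fixing three points of $\PP^1$ is the identity, $\Gamma$ embeds into the symmetric group on at most $4$ letters, giving $|\Gamma|\le 24$. You do note that $\Gamma$ permutes the branch points of $\tilde{C}\to C$, but you then appeal to an ``inspection of polyhedral types,'' which by itself does not exclude, say, the icosahedral group of order $60$: the bound $24$ comes precisely from the faithful embedding into $S_3$ or $S_4$, for which you must verify that there are at least three distinguished points (true: $4$ branch points for $m=2$ and $3$ for $m\in\{3,4,6\}$) and carry the injectivity argument through. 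Your reduction $S=\tilde{S}/\langle\psi\rangle$ is a reasonable alternative framing, but as written neither half of the proposal closes the stated bounds.
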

\begin{proof}
If $S$ is a primary Kodaira surface, then the group~$\Gamma$ does not contain elements that
act on the elliptic curve~$C$ by translations, see~\mbox{\cite[Corollary~3.3]{Shramov-Elliptic}}.
Since the automorphism group of an elliptic curve is a semi-direct product of the subgroup of translations
with a cyclic group of order at most~$6$, we conclude that  the order of~$\Gamma$ is at most~$6$.

Let $S$ be a secondary Kodaira surface, and let~\mbox{$\phi\colon S\to \mathbb{P}^1$}
be its algebraic reduction. Then $\phi$ has either  $3$ or $4$ multiple fibers, see for instance
the proof of \cite[Lemma~7.1]{ProkhorovShramov-CCS}.
Thus $\Gamma$ is isomorphic to a subgroup of the symmetric group on four elements.
In particular, the order of $\Gamma$ is at most~$24$.
\end{proof}

\begin{lemma}\label{lemma:primary-Kodaira-fiber}
Let $S$ be a primary Kodaira surface, and let~\mbox{$\phi\colon S\to C$}
be its algebraic reduction. Let~\mbox{$G\subset \Aut(S)_\phi$} be a non-trivial subgroup
of finite order. Then $G$ acts on $S$ without fixed points.
\end{lemma}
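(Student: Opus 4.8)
The plan is to work on the universal cover and exploit the nilmanifold structure of $S$. Recall that a primary Kodaira surface is a quotient $\CC^2/\Gamma$, where $\Gamma$ is a cocompact discrete group of affine transformations generated by the fiber translations $g_1\colon(z_1,z_2)\mapsto(z_1,z_2+1)$ and $g_2\colon(z_1,z_2)\mapsto(z_1,z_2+\tau)$ together with two transformations moving the base coordinate, one of which has the essential ``twisting'' form $g_4\colon(z_1,z_2)\mapsto(z_1+\sigma,z_2+z_1)$; the algebraic reduction $\phi$ is induced by the projection $(z_1,z_2)\mapsto z_1$, its fibers are the elliptic curves $E=\CC/\langle 1,\tau\rangle$, and the base is $C=\CC/\langle 1,\sigma\rangle$. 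I would first recall (this is known for such surfaces) that every automorphism of $S$ is induced by an affine transformation of $\CC^2$ normalizing $\Gamma$. An element $g\in\Aut(S)_\phi$ induces the identity on $C$, so after adjusting its lift by an element of $\Gamma$ I may assume the lift has the form $\tilde g\colon(z_1,z_2)\mapsto(z_1,\alpha z_2+\beta z_1+\gamma)$, where $\alpha$ is the linear part of $g$ along the fibers.

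The heart of the argument is to show that this fiberwise linear part is trivial, i.e. $\alpha=1$. I would conjugate the twisting generator $g_4$ by $\tilde g$ and compute that $\tilde g\, g_4\,\tilde g^{-1}\colon(z_1,z_2)\mapsto(z_1+\sigma,\,z_2+\alpha z_1+\beta\sigma)$. For this transformation to lie in $\Gamma$ it must coincide with $g_4$ up to a fiber translation (which is independent of $z_1$); comparing the coefficients of $z_1$ forces $\alpha=1$. This is exactly the step where the non-triviality of the elliptic bundle is used: conjugation rescales the twisting coefficient by $\alpha$, and the twisting can be preserved only when $\alpha=1$. Intrinsically this is the statement that fiberwise multiplication by $\alpha$ descends to the total space of a principal elliptic bundle with class $c$ only when $(\alpha-1)c=0$, which fails for a primary Kodaira surface. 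I expect this to be the main obstacle, both in pinning down the correct normal form for $\Gamma$ and in the bookkeeping of the conjugation.

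Once $\alpha=1$, I would use finiteness of $g$ to kill the remaining twist $\beta$: since $\tilde g$ fixes $z_1$, its $k$-th power sends $z_2$ to $z_2+k(\beta z_1+\gamma)$, and this can lie in $\Gamma$ only if its $z_1$-dependence vanishes, i.e. $\beta=0$; otherwise $g$ would have infinite order. Hence $\tilde g\colon(z_1,z_2)\mapsto(z_1,z_2+\gamma)$ is a constant translation along the fibers, and $g$ is trivial precisely when $\gamma\in\langle 1,\tau\rangle$. For $g\ne\mathrm{id}$ the element $\gamma$ is nonzero in $E$, so $g$ restricts to a nonzero translation on every fiber and therefore acts without fixed points on $S$. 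Finally, since a finite group acts freely exactly when each of its nontrivial elements does, I conclude that $G$ acts on $S$ without fixed points. Alternatively, the last steps can be streamlined using Lemma~\ref{lemma:faithful-on-fiber}: once $\alpha=1$, faithfulness of $G$ on a fiber shows that every nontrivial element acts by a nonzero, hence fixed-point-free, translation.
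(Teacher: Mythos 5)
Your argument is correct in substance, but it takes a genuinely different route from the paper's. The paper never uniformizes: assuming a fixed point $P$, it uses Lemma~\ref{lemma:faithful-on-fiber} to see that a generator $\gamma$ acts non-trivially on the fiber $F$ through $P$, splits the tangent space as $T_P(S)\cong T_P(F)\oplus T$ with trivial $\gamma$-action on the horizontal summand $T$, invokes \cite[Corollary~4.7]{ProkhorovShramov-CCS} to produce a compact curve $D$ of fixed points through $P$ tangent to $T$, so that $\phi(D)=C$; then $F^2=0$ and $D\cdot F>0$ make $S$ projective by Lemma~\ref{lemma:two-intersecting-curves}, contradicting the non-projectivity of a primary Kodaira surface. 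Your universal-cover computation is more explicit and proves more: every non-trivial finite-order element of $\Aut(S)_\phi$ is a nonzero fiberwise translation, a fact the paper only extracts later (in the proof of Corollary~\ref{corollary:Kodaira-abelian}, by combining the present lemma with Lemma~\ref{lemma:faithful-on-fiber}). The trade-off is that the paper's argument is soft and self-contained, while yours leans on structural input about $\Gamma$ and on the assertion that automorphisms lift to \emph{affine} transformations of $\CC^2$.

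That assertion is the one soft spot in your write-up, and you rightly flagged it; it is true, but it deserves either a reference (cf.\ the description of automorphisms of Kodaira surfaces used in \cite{Shramov-Elliptic}) or a short derivation, which your setup makes easy for fiber-preserving maps: a lift a priori has the form $(z_1,z_2)\mapsto(z_1,\alpha(z_1)z_2+\beta(z_1))$ with $\alpha,\beta$ entire; conjugating $g_1,g_2$ forces $\alpha$ to be a constant lattice unit, and conjugating the two base generators gives $\beta(z_1+1)-\beta(z_1)=\mathrm{const}$ and $\beta(z_1+\sigma)-\beta(z_1)=(1-\alpha)z_1+\mathrm{const}$, so $\beta''$ is doubly periodic, hence constant; then $\beta$ is a quadratic polynomial, the first relation kills the quadratic term, and the second forces $\alpha=1$. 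Note also that in a general normal form \emph{both} base generators may twist, say with coefficients $c_3,c_4$ over translations $\lambda_3,\lambda_4$; then the conjugation relations read $(\alpha-1)c_j+2q\lambda_j=0$, and it is precisely the nondegeneracy $c_3\lambda_4-c_4\lambda_3\neq 0$ (the nontriviality of the bundle, exactly your ``intrinsic'' remark) that rules out $\alpha\neq 1$; a quadratic shear of $z_2$ always reduces to your normal form with $c_3=0$. The remaining steps --- $\tilde g^{\,k}\in\Gamma$ killing the linear twist, and the identification of the pure fiber translations in $\Gamma$ with the lattice $\langle 1,\tau\rangle$ --- are correct as written.
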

\begin{proof}
Without loss of generality, we may assume that the group $G$ is cyclic.
Let $\gamma$ be its generator. Suppose that $G$ has a fixed point~\mbox{$P\in S$}.
Let $F$ be the fiber of $\phi$ passing through $P$.
Recall that the fiber~$F$ is irreducible and non-multiple, see~\mbox{\cite[\S\,V.5]{BHPV-2004}}.
By Lemma~\ref{lemma:faithful-on-fiber} the automorphism~$\gamma$ acts non-trivially
on the elliptic curve $F$. Suppose that~$\gamma$ fixes some point $P$ on $F$.
Note that the morphism $\phi$ is smooth at~$P$
(as well as at any other point on $S$).
Arguing as in the proof of Lemma~\ref{lemma:faithful-on-fiber}, we see that the differential
$$
d\phi\colon T_P(S)\longrightarrow T_{\phi(P)}(C)
$$
allows us to identify the two-dimensional representation $T_P(S)$ of the cyclic group $G$
with the direct sum~\mbox{$T_P(F)\oplus T$},
where the action of~$G$ on the subspace $T\cong T_{\phi(P)}(C)$ is trivial.
By~\mbox{\cite[Corollary~4.7]{ProkhorovShramov-CCS}} there exists a (compact) curve $D\subset S$ that consists of
fixed points of~$\gamma$, such that $D$ passes through $P$ and the tangent space to $D$ at $P$
coincides with the subspace $T\subset T_P(S)$.
Therefore, we have $\phi(D)=C$. Since the surface $S$ is not projective, this gives a contradiction
with Lemma~\ref{lemma:two-intersecting-curves}.
\end{proof}

We will use the following classical theorem proved by H.\,Minkowski.

\begin{theorem}[{see for instance~\cite[Theorem~1]{Serre2007}}]
\label{theorem:Minkowski}
For every positive integer $n$ the group~{$\GL_n(\mathbb{Q})$} has bounded finite subgroups.
\end{theorem}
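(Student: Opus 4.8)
The plan is to conjugate an arbitrary finite subgroup of $\GL_n(\QQ)$ into $\GL_n(\ZZ)$ and then reduce modulo a fixed small prime, so that the subgroup embeds into the \emph{finite} group $\GL_n(\FF_p)$; this produces a bound on its order depending only on $n$. First I would check that every finite subgroup $G\subset\GL_n(\QQ)$ is conjugate to a subgroup of $\GL_n(\ZZ)$. For this, form the lattice $\Lambda=\sum_{g\in G}g(\ZZ^n)\subset\QQ^n$; it is a finitely generated abelian group of rank $n$ that is $G$-invariant by construction, and choosing a $\ZZ$-basis of $\Lambda$ conjugates $G$ into $\GL_n(\ZZ)$. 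Thus it suffices to bound finite subgroups of $\GL_n(\ZZ)$.

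Next I would fix an odd prime, say $p=3$, and consider the reduction homomorphism $r_p\colon\GL_n(\ZZ)\to\GL_n(\FF_p)$. The key claim is that the kernel of $r_p$ is torsion-free. Granting this, the restriction of $r_p$ to any finite subgroup $G\subset\GL_n(\ZZ)$ is injective, so that
$$
|G|\le |\GL_n(\FF_p)|=\prod_{i=0}^{n-1}\bigl(p^n-p^i\bigr),
$$
a quantity depending only on $n$. This completes the argument modulo the key claim.

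The main obstacle is the torsion-freeness of $\ker r_p$, which I would establish by contradiction. Suppose $A\in\ker r_p$ has finite order greater than $1$; passing to a suitable power, I may assume that $A$ has prime order $\ell$. Write $A=I+p^sB$, where $B$ is an integer matrix not divisible by $p$ and $s\ge 1$, and expand $A^\ell=I$ by the binomial theorem. Comparing the term of lowest order in $p$ yields the contradiction: when $\ell\ne p$ the term $\ell\, p^s B$ is nonzero modulo $p^{s+1}$ while all higher terms vanish there, and when $\ell=p$ the divisibility of $\binom{p}{2}$ by $p$ — which is exactly where the oddness of $p$ enters — forces $A^p\equiv I+p^{s+1}B\not\equiv I\pmod{p^{s+2}}$. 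In either case $A^\ell\ne I$, contradicting the assumption, so $\ker r_p$ is torsion-free as claimed.
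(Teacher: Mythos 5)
Your proof is correct, and it is the classical Minkowski argument. The paper itself does not prove this statement at all: it quotes it from Serre's survey (Theorem 1 of \cite{Serre2007}), where it is established with the sharp multiplicative bound $M(n)=\prod_\ell \ell^{m_\ell}$ on the orders of finite subgroups. Your route is the standard self-contained proof behind that citation: the averaging-lattice trick $\Lambda=\sum_{g\in G}g(\ZZ^n)$ correctly conjugates $G$ into $\GL_n(\ZZ)$ (the sum is finitely generated, contains $\ZZ^n$, hence is free of rank $n$, and is visibly $G$-stable), and torsion-freeness of $\ker\bigl(\GL_n(\ZZ)\to\GL_n(\FF_3)\bigr)$ then gives the bound $|G|\le\prod_{i=0}^{n-1}(3^n-3^i)$, which depends only on $n$ --- all the paper needs, since it never uses the sharp constant. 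Your bound is of course far from optimal (Serre's $M(n)$ is attained), but optimality is irrelevant here. One small precision in the case $\ell=p$: besides the terms $\binom{p}{k}p^{ks}B^k$ with $2\le k\le p-1$, which are divisible by $p^{ks+1}\ge p^{s+2}$ because $p\mid\binom{p}{k}$, you must also dispose of the top term $p^{ps}B^p$, where $\binom{p}{p}=1$; this requires $ps\ge s+2$, i.e. $s(p-1)\ge 2$, which again holds exactly because $p\ge 3$ (and fails for $p=2$, $s=1$, consistent with the counterexample $-I\equiv I\pmod 2$). With that half-sentence added, the argument is complete.
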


\begin{corollary}\label{corollary:torus-stabilizer}
For every positive integer $n$ there exists a constant~\mbox{$B_T(n)$} such that for every $n$-dimensional complex torus~$S$,
every point~$P$ on~$S$, and every finite subgroup~\mbox{$G\subset \Aut(S;P)$}, the order of~$G$
is at most~\mbox{$B_T(n)$}.
\end{corollary}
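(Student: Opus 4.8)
The plan is to reduce the assertion to Minkowski's theorem (Theorem~\ref{theorem:Minkowski}) by using the standard description of the automorphism group of a complex torus. Write $S=V/\Lambda$, where $V\cong\CC^n$ and $\Lambda\subset V$ is a lattice of rank $2n$. First I would recall the classical fact that every automorphism of $S$ fixing the origin is induced by a $\CC$-linear automorphism of $V$ preserving $\Lambda$. This gives a homomorphism from $\Aut(S;0)$ to the group of $\ZZ$-linear automorphisms of $\Lambda\cong\ZZ^{2n}$, and hence an embedding $\Aut(S;0)\hookrightarrow\GL_{2n}(\ZZ)\subset\GL_{2n}(\QQ)$. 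This homomorphism is injective because $\Lambda$ spans $V$ over $\RR$, so a linear map preserving $\Lambda$ and acting trivially on it is the identity.

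Next I would invoke Theorem~\ref{theorem:Minkowski} to obtain a constant $B=B(2n)$, depending only on $n$, that bounds the order of every finite subgroup of $\GL_{2n}(\QQ)$. The crucial point for the uniformity claimed in the statement is that this bound comes from the ambient group $\GL_{2n}(\QQ)$ and is therefore completely independent of the particular torus $S$, that is, of the lattice $\Lambda$. Combining this with the embedding above, every finite subgroup of $\Aut(S;0)$ has order at most $B$, with $B$ independent of $S$.

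Finally, to handle an arbitrary point $P$ rather than the origin, I would use that $\Aut(S)$ is the semidirect product of its subgroup of translations with $\Aut(S;0)$. Conjugation by the translation carrying $0$ to $P$ sends $\Aut(S;0)$ isomorphically onto $\Aut(S;P)$, so the two stabilizers are isomorphic as abstract groups and obey the same bound on the orders of finite subgroups. Setting $B_T(n)=B(2n)$ then yields the corollary. I do not expect a genuine obstacle here: every ingredient is standard, and the only steps requiring a little care are the faithfulness of the action of $\Aut(S;0)$ on $\Lambda$ and the observation that the Minkowski bound does not depend on the chosen torus.
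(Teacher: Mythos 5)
Your proof is correct and takes essentially the same approach as the paper: the paper likewise embeds the stabilizer $\Aut(S;P)$ into $\GL_{2n}(\ZZ)$ (citing \cite[Theorem~8.4]{ProkhorovShramov-CCS} for this fact rather than deriving it from the lattice description $S=V/\Lambda$ and conjugation by a translation, as you do) and then concludes by Minkowski's theorem (Theorem~\ref{theorem:Minkowski}). The only difference is that you supply the standard details the paper outsources to a reference, and these details (linearity of origin-fixing automorphisms, faithfulness on $\Lambda$, uniformity of the bound) are all handled correctly.
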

\begin{proof}
It is well-known that
$\Aut(S;P)$ is isomorphic to a subgroup of~\mbox{$\GL_{2n}(\mathbb{Z})$}; see for instance  \cite[Theorem~8.4]{ProkhorovShramov-CCS}. Therefore, the assertion follows from Theorem~\ref{theorem:Minkowski}.
\end{proof}

\begin{corollary}
\label{corollary:K3}
There exists a constant $B_{K3}$ such that for every compact complex surface $S$
which is either a $K3$ surface or an Enriques surface, and for every finite subgroup
$G\subset \Aut(S)$, the order of~$G$
is at most~$B_{K3}$.
\end{corollary}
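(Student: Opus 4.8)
The plan is to reduce both cases to Minkowski's theorem (Theorem~\ref{theorem:Minkowski}) by exhibiting, in each case, a faithful action of a finite subgroup $G\subset\Aut(S)$ on a free abelian group of rank bounded independently of $S$. The key observation is that for a $K3$ or an Enriques surface the second integral cohomology is a free $\ZZ$-module of fixed rank ($22$ and $10$, respectively), so a faithful action on it lands in a single group $\GL_N(\ZZ)\subset\GL_N(\QQ)$ to which Theorem~\ref{theorem:Minkowski} applies, yielding a bound that does not depend on the particular surface.

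First I would treat the $K3$ case. If $X$ is a $K3$ surface, then $H^2(X,\ZZ)$ is a free abelian group of rank $22$, and the natural representation of $\Aut(X)$ on $H^2(X,\ZZ)\cong\ZZ^{22}$ is faithful: an automorphism acting trivially on $H^2(X,\ZZ)$ preserves the period, and hence is the identity by the Torelli theorem (see for instance~\cite{BHPV-2004}). Thus $\Aut(X)$ embeds into $\GL_{22}(\ZZ)\subset\GL_{22}(\QQ)$, and by Theorem~\ref{theorem:Minkowski} there is a constant $B'$, independent of $X$, bounding the order of every finite subgroup of $\GL_{22}(\QQ)$, and therefore of $\Aut(X)$.

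For the Enriques case I would pass to the $K3$ double cover rather than argue directly on $H^2$, since an Enriques surface may carry nontrivial automorphisms acting trivially on cohomology, so the cohomological representation need not be faithful. Let $S$ be an Enriques surface and let $\pi\colon\tilde S\to S$ be its canonical double cover, which is a $K3$ surface; let $\iota$ be the (fixed-point-free) covering involution. Because $\pi$ is functorially attached to $S$ as the cover trivializing $\KKK_S$, and $\KKK_S$ is $2$-torsion, every element of $\Aut(S)$ lifts to $\tilde S$; lifts compose, so the lifts of a finite subgroup $G\subset\Aut(S)$ form a finite subgroup $\tilde G\subset\Aut(\tilde S)$ fitting into an exact sequence
\[
1\longrightarrow\langle\iota\rangle\longrightarrow\tilde G\longrightarrow G\longrightarrow 1.
\]
Hence $|G|=\tfrac12\,|\tilde G|$, and since $\tilde G$ is a finite subgroup of the automorphism group of a $K3$ surface, the constant $B'$ from the previous step gives $|G|\le B'/2$. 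Taking $B_{K3}=B'$ then covers both cases simultaneously.

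The main obstacle is the potential non-faithfulness of the cohomological representation for Enriques surfaces; the device that resolves it is the canonicity of the $K3$ cover, which ensures that finite subgroups lift with only a controlled (order-two) ambiguity and thus reduces everything to the $K3$ bound. The Torelli input needed for faithfulness in the $K3$ case is classical, so the only care required there is to record that the rank $22$ is the same for all $K3$ surfaces, which is exactly what makes the resulting bound uniform.
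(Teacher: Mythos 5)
Your proposal is correct, and the $K3$ half is essentially the paper's argument: a faithful cohomological representation of bounded rank plus Minkowski's theorem (Theorem~\ref{theorem:Minkowski}). Where you genuinely diverge is the Enriques case. The paper handles both cases with a single representation $\rho\colon\Aut(S)\to\GL\bigl(H^*(S,\QQ)\bigr)$ on the total rational cohomology (of dimension at most $24$), and for Enriques surfaces it quotes the result of Mukai and Namikawa that the kernel of $\rho$ has at most $4$ elements, so the bound is (Minkowski constant)${}\times 4$. You instead avoid the non-faithfulness issue altogether by lifting to the canonical $K3$ double cover: since the cover is functorially attached to $S$ (it trivializes the $2$-torsion class $\KKK_S$, equivalently it is the universal cover), every finite $G\subset\Aut(S)$ has a finite preimage $\tilde G$ with $|\tilde G|=2|G|$, reducing everything to the $K3$ bound. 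Your lifting argument is sound — the full preimage of $G$ is indeed a group containing the covering involution as a lift of the identity, giving the exact sequence you write — and it is more self-contained, trading the nontrivial Mukai--Namikawa classification of cohomologically trivial automorphisms for the elementary canonicity of the cover; it is in fact the same device the paper itself uses for secondary Kodaira surfaces in Corollary~\ref{corollary:Kodaira-stabilizer}. What the paper's route buys in exchange is uniformity of exposition (one representation treats both cases) and a sharper awareness of exactly where faithfulness fails; your route buys independence from \cite{MukaiNamikawa} and even yields the slightly stronger numerical relation $|G|\le B'/2$ in the Enriques case.
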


\begin{proof}
Let $S$ be either a $K3$ surface or an Enriques surface.
Consider the representation of $\Aut(S)$ in the cohomology group
\[
\rho\colon \Aut(S)\to \GL\big(H^*(S,\QQ)\big).
\]
In both cases we have to consider the dimension of the vector space~\mbox{$H^*(S,\QQ)$} does not exceed~$24$,
see for instance \cite[Table~10]{BHPV-2004}.
By Theorem~\ref{theorem:Minkowski} the
orders of finite subgroups in the image of the representation $\rho$ are bounded by some constant that does not depend on~$S$.
On the other hand, if $S$ is a $K3$ surface, then the representation $\rho$ is faithful
(see for instance \cite[Proposition~15.2.1]{Huybrechts}).
If $S$ is an Enriques surface, then the kernel of~$\rho$ contains at most~$4$ elements
(see~\cite{MukaiNamikawa}).
\end{proof}

Finite groups acting on $K3$ surfaces are well studied,
see \cite{Nikulin} and references therein. This classification can be 
used to obtain an adequate bound for the constant 
$B_{K3}$ from Corollary~\ref{corollary:K3}.

The following theorem is well-known.

\begin{theorem}[{see for instance \cite[Corollary~14.3]{Ueno1975}}]
\label{theorem:general-type}
Let $S$ be a compact complex surface of Kodaira dimension $2$. Then the group $\Aut(S)$ is finite.
\end{theorem}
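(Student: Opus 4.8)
The plan is to reduce to a minimal surface and then realize its automorphism group as the automorphism group of a polarized projective variety, an algebraic group of finite type whose identity component I will show to vanish. A surface of Kodaira dimension $2$ is non-ruled, and its minimal model $S_0$ is unique; by Lemma~\ref{lemma:Bir-vs-Aut} one has $\Bim(S_0)=\Aut(S_0)$. Since $\Aut(S)$ embeds into $\Bim(S)\cong\Bim(S_0)=\Aut(S_0)$, it is enough to prove that $\Aut(S_0)$ is finite, so I may assume that $S$ itself is minimal. Then $\KKK_S$ is nef with $\KKK_S^2>0$, and $|m\KKK_S|\neq\emptyset$ for $m\gg0$; choosing an effective divisor $D\in|m\KKK_S|$ we get $D^2=m^2\KKK_S^2>0$, so Lemma~\ref{lemma:two-intersecting-curves} applied to $C_1=C_2=D$ shows that $S$ is projective.

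Next I would pass to the canonical model. For $m\gg0$ the pluricanonical morphism $\phi_{|m\KKK_S|}$ contracts the finitely many smooth rational curves of self-intersection $-2$ to a normal projective surface $X$ with at worst rational double point singularities and ample canonical class $\KKK_X$, and $S$ is the minimal resolution of $X$. Because the minimal resolution of a normal surface is unique, every automorphism of $X$ lifts uniquely to $S$, so that $\Aut(S)\cong\Aut(X)$. Every automorphism of $X$ preserves $\KKK_X$, hence $\Aut(X)$ coincides with the automorphism group of the polarized variety $(X,\KKK_X)$, which is an algebraic group of finite type. In particular $\Aut(S)$ has only finitely many connected components, and it remains to prove that its identity component $\Aut(S)^{0}$ is trivial.

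The identity component $\Aut(S)^{0}$ is a connected algebraic group whose Lie algebra is $H^0(S,T_S)$, so it suffices to show that $S$ carries no nonzero holomorphic vector field. This vanishing is the main obstacle. I would argue that a nonzero field $\theta$ generates a positive-dimensional connected subgroup of $\Aut(S)$, and that the associated Lie derivative acts on each space $H^0(S,\KKK_S^{\otimes m})$; diagonalizing this action and using that the ratios of a basis generate the function field of $S$ --- since the pluricanonical map is birational for $m\gg0$ --- one finds that the function field would be generated by eigenfunctions of $\theta$, which is incompatible with $\KKK_S$ being big. (Equivalently, one may invoke the classification of compact complex surfaces admitting nonzero holomorphic vector fields, all of which have non-positive Kodaira dimension.) Hence $H^0(S,T_S)=0$, so $\Aut(S)^{0}$ is trivial; combined with the finiteness of the component group established above, this shows that $\Aut(S)$ is finite.
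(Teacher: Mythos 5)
The paper itself contains no proof of this statement: it is imported from the literature (Ueno, Corollary~14.3, where one proves the stronger fact that the whole group of bimeromorphic selfmaps of a variety of general type is finite, via the pluricanonical representation into a projective linear group). So your attempt must be judged against the standard proof rather than an internal one. Your reductions are correct and in the spirit of the paper's toolkit: passing to the unique minimal model and using Lemma~\ref{lemma:Bir-vs-Aut} to embed $\Aut(S)$ into $\Aut(S_0)$; projectivity via Lemma~\ref{lemma:two-intersecting-curves} applied to $D\in|m\KKK_S|$ (using the classification fact that a minimal surface with $\varkappa=2$ has $\KKK_S$ nef and $\KKK_S^2>0$); the canonical model $X$ with $\Aut(S)\cong\Aut(X)$; and the fact that automorphisms of a polarized projective variety form an algebraic group of finite type (Matsusaka--Mumford, or a Hilbert scheme argument on graphs), which bounds the component group. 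All of this is sound.

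The genuine gap is exactly where you flag ``the main obstacle'': the vanishing $H^0(S,T_S)=0$. As written, the eigenfunction argument establishes nothing --- the function field of any surface admits plenty of nonzero derivations with abundant eigenfunctions, and you never derive an actual contradiction with bigness of $\KKK_S$. Worse, the parenthetical fallback is false as stated: $S=E\times C$ with $E$ elliptic and $g(C)\ge 2$ has $\varkappa(S)=1$ and carries the nonzero global vector field lifted from $E$, so it is not true that surfaces with nonzero holomorphic vector fields all have non-positive Kodaira dimension. The correct statement --- no nonzero vector fields on surfaces of general type (Matsumura) --- is precisely what you are trying to prove, so ``invoking the classification'' begs the question unless replaced by a precise citation. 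The standard way to close the gap along your own lines is: for $m\gg 0$ the flow $\exp(t\theta)$ acts linearly on $V_m=H^0\big(S,\KKK_S^{\otimes m}\big)$ compatibly with the birational pluricanonical map $\Phi_m$; if the induced action on $\PP(V_m^{\vee})$ is trivial, then birationality of $\Phi_m$ forces $\exp(t\theta)=\mathrm{id}$, contradicting $\theta\ne 0$; otherwise the Zariski closure of this one-parameter group in $\PGL(V_m)$ is a positive-dimensional connected \emph{linear} algebraic group, hence contains $\mathbb{G}_a$ or $\mathbb{G}_m$, whose orbit closures through a general point of $\Phi_m(S)$ are rational curves --- so $S$ would be uniruled, impossible for $\varkappa(S)=2$. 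With this step (or a direct reference to Matsumura's theorem) inserted, your proof is complete; it then follows the classical canonical-model route, which is a legitimate alternative to Ueno's argument that the paper cites.
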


\section{Elliptic surfaces}
\label{section:elliptic-surfaces}

In this section we prove Proposition~\ref{proposition:elliptic-fibration-base-subgroup}.

To describe fibers of elliptic fibrations we will use the standard notation for
types of degenerate fibers in the Kodaira classification, see~\mbox{\cite[\S\,V.7]{BHPV-2004}} or \cite[\S\,I.4]{Miranda1989}.
Recall that a fibration $\phi\colon S\to C$ with compact fibers, where $S$ is a smooth
(but possibly non-compact) complex surface and $C$ is a smooth (but possibly non-compact)
curve, is called \emph{relatively minimal}
if the fibers of $\phi$ do not contain smooth rational curves with self-intersetion~$-1$.

Given an elliptic fibration $\phi\colon S\to C$, one can consider the function~$J$
that associates to a point $c\in C$ the value of the $j$-invariant of the fiber~\mbox{$F=\phi^*c$}, provided that
the fiber $F$ is smooth. It is straightforward to see that $J$ is a meromorphic
function on~$C$.

\begin{lemma}
\label{lemma:j-poles}
Let $\Delta\subset\mathbb{C}$ be the unit disc.
Let $S$ be a smooth (non-compact) complex surface, and let~\mbox{$\phi\colon S\to \Delta$}
be a fibration with compact fibers whose typical fiber is an elliptic curve.
Suppose that~$\phi$ is relatively minimal. Consider the fiber  $F=\phi^*\mathbf{0}$ over the point~\mbox{$\mathbf{0}\in\Delta$}.
Suppose that the fiber $F$ is non-multiple, and has type $I_b$ or~$I_b^*$ for some~\mbox{$b\ge 1$}.
Then the function $J$ has a pole at the point~$\mathbf{0}$.
\end{lemma}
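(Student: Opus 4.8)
The plan is to argue by contradiction through the monodromy of the family, using the classical description of the $j$-invariant as the composition of the period map with the elliptic modular function. First I would shrink $\Delta$ so that $\mathbf 0$ is the only point over which $\phi$ has a singular fiber; then the restriction of $\phi$ to $\phi^{-1}(\Delta\setminus\{\mathbf 0\})\to\Delta\setminus\{\mathbf 0\}$ is a smooth family of elliptic curves, and one has a well-defined local monodromy transformation $T\in\SL_2(\ZZ)$ acting on the first homology of a typical fiber. Since $F$ is non-multiple of type $I_b$ or $I_b^\ast$ with $b\ge 1$, the Kodaira classification of degenerate fibers (see \cite[\S\,V.7]{BHPV-2004} or \cite[\S\,I.4]{Miranda1989}) shows that $T$ is conjugate to $\begin{pmatrix}1&b\\0&1\end{pmatrix}$ or to $-\begin{pmatrix}1&b\\0&1\end{pmatrix}$ respectively (here the hypothesis that $F$ is non-multiple is used to pin $T$ down to the standard Kodaira matrix). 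In either case $T$ has infinite order. The goal is then to show that a pole of $J$ is forced, so I would assume the contrary — that $J$ extends holomorphically across $\mathbf 0$ with a finite value $j_0=J(\mathbf 0)\in\CC$ — and derive that $T$ has finite order, a contradiction.

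The main tool is the modular interpretation of $J$. Passing to the universal cover of the punctured disc, the smooth family over $\Delta\setminus\{\mathbf 0\}$ determines a holomorphic period map into the upper half-plane $\mathbb H$ that is equivariant for $T$, and $J$ is recovered as the composition of this map with the modular function $j\colon\mathbb H\to\CC$. Equivalently, $J$ is the classifying map of the family into the modular orbifold $\mathbb H/\PSL_2(\ZZ)\cong\CC$, and the image of a positively oriented generator of $\pi_1(\Delta\setminus\{\mathbf 0\})$ under the induced homomorphism to the orbifold fundamental group $\pi_1^{\mathrm{orb}}=\PSL_2(\ZZ)$ is precisely the class $\bar T$ of $T$ modulo $\pm\mathrm{Id}$. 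Thus it suffices to show that $\bar T$ has finite order, since $\bar T^{k}=\mathrm{Id}$ implies $T^{2k}=\mathrm{Id}$.

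Next I would analyze $\bar T$ according to the finite value $j_0$. If $J$ is holomorphic at $\mathbf 0$, then a small loop around $\mathbf 0$ is carried by $J$ to a loop winding $m=\ord_{\mathbf 0}(J-j_0)$ times around $j_0$ in the $j$-line $\CC$. The orbifold $\mathbb H/\PSL_2(\ZZ)$ has cone points only at $j=0$ (of order $3$) and $j=1728$ (of order $2$), and $\PSL_2(\ZZ)\cong\ZZ/3\ast\ZZ/2$ is generated by the corresponding loops. Hence if $j_0\notin\{0,1728\}$, a small loop around $j_0$ bounds a disc avoiding the cone points, so $\bar T=\mathrm{Id}$; and if $j_0=0$ (respectively $j_0=1728$), then $\bar T$ is an $m$-th power of an element of order $3$ (respectively $2$), and so has finite order. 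In every case $\bar T$, and therefore $T$, has finite order, contradicting the first paragraph. Thus $J$ must have a pole at $\mathbf 0$.

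The step I expect to be the main obstacle is the second one: setting up the modular/orbifold dictionary carefully enough that the topological monodromy $T$ of the elliptic family is literally identified with the monodromy of the map $J$ into the $j$-line, with proper attention to the distinction between $\SL_2(\ZZ)$ and $\PSL_2(\ZZ)$ (which is exactly what separates $I_b$ from $I_b^\ast$), to the two orbifold points $j=0,1728$, and to the possible ramification of $J$ at $\mathbf 0$. Once this dictionary is in place the case analysis is immediate. An alternative I would fall back on, avoiding the orbifold language, is to use the explicit local normal form of an $I_b$ degeneration (the $q$-expansion picture, in which $J\sim q^{-b}$ as $q\to 0$) together with the observation that a fiber of type $I_b^\ast$ becomes of type $I_{2b}$ after the degree-two base change that trivializes the $-\mathrm{Id}$ part of the monodromy.
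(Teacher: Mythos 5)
Your proof is correct in substance, but it takes a genuinely different route from the paper. The paper's argument is a short reduction: since the fiber is non-multiple it has a reduced component, hence $\phi$ admits a local analytic section near $\mathbf{0}$, so the fibration can be put in Weierstrass form, and the pole of $J$ (indeed, its exact order $b$) is then read off from the table relating the vanishing orders of $g_2$, $g_3$ and the discriminant to the Kodaira fiber type (\cite[Table~IV.3.1]{Miranda1989}, cf.\ \cite[\S\,V.10, Table~6]{BHPV-2004}). You instead use the \emph{homological} invariant: a non-multiple fiber of type $I_b$ or $I_b^*$, $b\ge 1$, has local monodromy $T$ conjugate to $\pm\left(\begin{smallmatrix}1&b\\0&1\end{smallmatrix}\right)$, of infinite order, while a $J$ that extends holomorphically across $\mathbf{0}$ forces the class $\bar T\in\PSL_2(\ZZ)$ to stabilize a point (or a small disc) of the modular orbifold $\mathbb{H}/\PSL_2(\ZZ)$ and hence to have order dividing $1$, $2$ or $3$ --- a contradiction. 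Both proofs ultimately draw on the same body of Kodaira's classification (you need the fiber-type/monodromy table where the paper needs the fiber-type/Weierstrass table, and your appeal to non-multiplicity plays the same role as the paper's appeal to the existence of a reduced component), but your argument is topological, avoids the section/Weierstrass reduction entirely, and generalizes readily, whereas the paper's is shorter given the citation and yields the precise pole order. Two small points to tighten: (1) your winding-number phrasing with $m=\ord_{\mathbf 0}(J-j_0)$ silently assumes $J$ is non-constant; in the constant (isotrivial) case the same framework still works, since then the period map is constant and $\bar T$ lies in the finite stabilizer of a point of $\mathbb{H}$, so you should state that case separately rather than let $m$ be infinite; (2) the clean way to justify ``$\bar T$ is a power of the elliptic element'' is via the period map $\tau$ on the universal cover of the punctured disc, using that $j$ is a covering over a small disc around $j_0\notin\{0,1728\}$ and a cyclic branched cover near $j_0\in\{0,1728\}$, with $\bar T$ forced into the deck group because it commutes with $j$ and preserves the component containing the image of $\tau$ --- the orbifold-fundamental-group language you use is fine, but this is the step that needs the lift, exactly as you anticipate in your final paragraph.
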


\begin{proof}
Since the fiber $F$ is non-multiple, at least one of its irreducible components is reduced  (see~\mbox{\cite[\S\,V.7]{BHPV-2004}}).
Hence in a small neighborhood of the point $\mathbf{0}$ the fibration $\phi$ admits an analytic section.
Thus we can assume that $\phi$ is taken to the Weierstrass form, see \cite[Lecture~II]{Miranda1989}.
Now the assertion follows from~\mbox{\cite[Table~IV.3.1]{Miranda1989}}
(cf.~\mbox{\cite[\S\,V.10, Table~6]{BHPV-2004}} and \cite[Proposition~VI.1.1]{Miranda1989}).
\end{proof}

Recall that a fibration is called \emph{isotrivial} if its fibers over the points of some
dense open subset are isomorphic to each other.

\begin{lemma}[{cf.~\cite[\S2.1]{Sawon}}]
\label{lemma:isotrivial-fibers}
Let $\Delta\subset\mathbb{C}$ be the unit disc.
Let $S$ be a smooth (non-compact) complex surface, and let~\mbox{$\phi\colon S\to \Delta$}
be a fibration with compact fibers whose typical fiber is an elliptic curve.
Suppose that  $\phi$ is relatively minimal and isotrivial. Consider the fiber  $F=\phi^*\mathbf{0}$ over the point $\mathbf{0}\in\Delta$.
If $F$ is a non-multiple fiber, then it cannot be of type $I_b$ or~$I_b^*$ for $b\ge 1$;
if $F$ is a multiple fiber, then it can only have type ${}_mI_0$ for some $m\ge 2$.
\end{lemma}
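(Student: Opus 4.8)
The plan is to analyze the behaviour of the $j$-invariant function $J$ and use it as a bridge between isotriviality and the Kodaira type of the central fiber. The key observation is that isotriviality is exactly the statement that $J$ is locally constant on the dense open set where the fibration is smooth, hence (being meromorphic) that $J$ is a constant function on $\Delta$. In particular $J$ is holomorphic and takes a finite value at $\mathbf{0}$, so it has no pole there.

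\smallskip

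First I would dispose of the non-multiple case by a direct appeal to Lemma~\ref{lemma:j-poles}. If $F$ were non-multiple and of type $I_b$ or $I_b^*$ with $b\ge 1$, then that lemma forces $J$ to have a pole at $\mathbf{0}$, contradicting the constancy of $J$ established above. This rules out these types immediately.

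\smallskip

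The multiple case requires more care, and I expect it to be the main obstacle. Here the strategy is to pass to a cyclic base change that unwinds the multiplicity: if $F={}_mF_{\mathrm{red}}$ is a multiple fiber of multiplicity $m$, one takes a degree-$m$ cover of the disc ramified over $\mathbf{0}$, normalizes the fiber product, and resolves to obtain a new relatively minimal elliptic fibration $\phi'\colon S'\to\Delta'$ whose central fiber is now non-multiple. One must track how the Kodaira type transforms under this base change using the standard tables (as in \cite[\S\,I.4]{Miranda1989} or \cite[\S\,V.10]{BHPV-2004}); the $j$-invariant is a birational invariant of the elliptic surface and is unchanged on the smooth locus, so $\phi'$ remains isotrivial with the same constant value of $J$. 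Applying the already-settled non-multiple case to $\phi'$, one sees the central fiber of $\phi'$ cannot be of type $I_b$ or $I_b^*$; pulling this conclusion back through the base-change tables eliminates all fiber types for $F$ except the smooth multiple fibers, i.e.\ type ${}_mI_0$ with $m\ge 2$. The delicate point is verifying that no multiple fiber of a potentially additive type (such as ${}_mI_0^*$ or the types $II$, $III$, $IV$ and their starred analogues) survives: one argues that under the base change an additive or $I_b^*$-type fiber would acquire, after passing through the tables, a multiplicative reduction of type $I_{b'}$ with $b'\ge 1$ on the non-multiple cover, which the isotriviality of $\phi'$ forbids. Care is also needed because the only multiple fibers allowed in a relatively minimal elliptic fibration are themselves of type ${}_mI_b$, so the combinatorics is constrained from the outset and the surviving possibility is precisely the smooth one ${}_mI_0$.
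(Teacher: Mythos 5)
Your proposal follows the paper's proof essentially step for step: the non-multiple case is dispatched exactly as in the paper, by observing that isotriviality makes $J$ constant and invoking Lemma~\ref{lemma:j-poles}, and the multiple case is reduced to it by the same degree-$m$ ramified base change (the standard construction of \cite[\S\,III.10]{BHPV-2004}, \cite[pp.~571--572]{Kodaira-1963}) converting a fiber ${}_mI_r$ with $r\ge 1$ into a non-multiple fiber $I_{r'}$ with $r'>0$ of an isotrivial relatively minimal fibration. Your closing observation that multiple fibers of a relatively minimal elliptic fibration can only be of type ${}_mI_r$ is precisely the fact from \cite[\S\,V.7]{BHPV-2004} that the paper cites, so your worry about additive multiple types is moot from the outset.
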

\begin{proof}
Since the fibration $\phi$ is isotrivial, the $j$-invariant of its fibers is a constant
function on the set $\Delta\setminus\Lambda$, where  $\Lambda$ is the set of
images of all multiple fibers of $\phi$. In particular, this function cannot have a pole
at a point of $\Delta\setminus\Lambda$. Therefore, the assertion about non-multiple fibers
follows from Lemma~\ref{lemma:j-poles}.

Now suppose that the fiber $F$ is multiple.
According to~\cite[\S\,V.7]{BHPV-2004}, it can only have type~${}_mI_r$, where $r\ge 0$ and~\mbox{$m\ge 2$}.
We need to exclude the case $r\ge 1$.
To do this, apply the standard construction from~\mbox{\cite[\S\,III.10]{BHPV-2004}} or~\mbox{\cite[pp.~571--572]{Kodaira-1963}}:
it gives an isotrivial relatively minimal elliptic fibration over~$\Delta$
with a non-multiple fiber of type~$I_{r'}$ for some~\mbox{$r'>0$}. The latter gives a contradiction with what we already proved above.
\end{proof}

We will need a detailed information on automorphism groups of compact complex surafces
of Kodaira dimension~$1$
(similar computations were used in the proofs of~\cite[Lemma~8.2]{ProkhorovShramov-CCS} and~\cite[Lemma~3.3]{Prokhorov-Shramov-3folds}).

\begin{lemma}
\label{lemma:elliptic-fibration-base-subgroup-P1}
Let $S$ be a minimal compact complex surface of Kodaira dimension~$1$.
Consider the pluricanonical fibration $\phi\colon S\to C$ and suppose that~\mbox{$C\cong\PP^1$}.
Then the image $\Gamma$ of the group~\mbox{$\Aut(S)$} in $\Aut(C)$ is finite.
\end{lemma}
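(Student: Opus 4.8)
The plan is to use the identification $\Aut(C)=\PGL_2(\CC)$ together with the following elementary fact: a subgroup $G\subset\PGL_2(\CC)$ preserving a finite subset $\Sigma\subset\PP^1$ with $|\Sigma|\ge 3$ is finite, because the resulting homomorphism $G\to\operatorname{Sym}(\Sigma)$ has trivial kernel (an element of $\PGL_2(\CC)$ fixing three distinct points is the identity), so $G$ embeds into a finite symmetric group. Since the pluricanonical fibration $\phi$ is intrinsic, it is $\Aut(S)$-equivariant, and hence $\Gamma$ acts on $C\cong\PP^1$; thus the whole problem reduces to exhibiting a $\Gamma$-invariant subset of $C$ consisting of at least three points.

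First I would dispose of the non-isotrivial case by means of the $j$-invariant. Each $g\in\Aut(S)$ carries the fiber over a point $c$ isomorphically onto the fiber over $\bar g(c)$, so $J\circ\bar g=J$ on the dense set where the fibers are smooth, and therefore identically; thus the meromorphic function $J\colon C\to\PP^1$ is $\Gamma$-invariant. If $J$ is non-constant, then every $\gamma\in\Gamma$ satisfies $J\circ\gamma=J$, so $\Gamma$ is contained in the group of $J^*\CC(\PP^1)$-automorphisms of the function field $\CC(C)$; this group is finite, since $\CC(C)$ is a finite extension of $J^*\CC(\PP^1)$. This settles the case in which $\phi$ is not isotrivial.

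It remains to treat the isotrivial case, which I expect to be the main obstacle, since here $J$ is constant and carries no information. Instead I would take $\Sigma$ to be the union of the images of the multiple fibers and of the singular fibers of $\phi$ (those whose support is singular or reducible); this is a finite $\Gamma$-invariant set, and the claim is that $\varkappa(S)=1$ forces $|\Sigma|\ge 3$. By the canonical bundle formula (see, e.g., \cite[\S\,V.12]{BHPV-2004}) one has $\KKK_S\qq\phi^*D$ with $D=K_C+L+\sum_i\tfrac{m_i-1}{m_i}p_i$, where the $m_iF_i$ are the multiple fibers over the points $p_i$ and $\deg L=\chi(\OOO_S)=\chit(S)/12$; since $C\cong\PP^1$ we have $\deg K_C=-2$, and $\varkappa(S)=1$ is equivalent to $\deg D>0$.

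Suppose now that $|\Sigma|\le 2$. By Lemma~\ref{lemma:isotrivial-fibers} every multiple fiber has type ${}_mI_0$, hence smooth support, so the multiple fibers are disjoint from the singular fibers; moreover every non-multiple singular fiber is additive (one of $II,III,IV,I_0^*,IV^*,III^*,II^*$), with Euler number at most $10$. If there were exactly one singular fiber, over a point $q$, then the local monodromy around $q$ would be trivial, as $C\setminus\{q\}\cong\CC$ is simply connected, contradicting the fact that the monodromy around an additive fiber is non-trivial; hence the singular fibers either are absent or number at least two. If there are none, then $\chi(\OOO_S)=0$ and $\deg D=-2+\sum_i\tfrac{m_i-1}{m_i}<-2+2=0$, contradicting $\varkappa(S)=1$. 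Otherwise there are exactly two singular fibers and $\Sigma$ consists precisely of their images, so there are no multiple fibers, whence $\deg D=-2+\chit(S)/12\le -2+20/12<0$, again a contradiction. Therefore $|\Sigma|\ge 3$, and the elementary fact recorded above shows that $\Gamma$ is finite.
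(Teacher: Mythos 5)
Your proof is correct, and it reorganizes the argument in a genuinely different way from the paper's. The paper assumes $\Gamma$ is infinite, extracts an infinite orbit to force constancy of the $j$-invariant (hence isotriviality), and then runs an interleaved contradiction: infiniteness of $\Gamma$ bounds the number of singular non-multiple fibers by two, whence $0<\chit(S)\le 20$ and, by integrality in Noether's formula, $\chi(\OOO_S)=1$; the canonical bundle formula then yields at least two multiple fibers, and in every case a $\Gamma$-invariant set of at least three points appears, contradicting infiniteness. You instead dispose of the non-isotrivial case by the cleaner observation that $J\circ\gamma=J$ embeds $\Gamma$ into the automorphism group of the finite extension $\CC(C)/J^*\CC(\PP^1)$, which gives the explicit bound $|\Gamma|\le\deg J$ and avoids the infinite-orbit step entirely; and in the isotrivial case you prove a $\Gamma$-free statement that is stronger than what the paper extracts, namely that an isotrivial minimal properly elliptic fibration over $\PP^1$ always has at least three non-smooth fibers. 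Both routes rest on the same key inputs (Lemma~\ref{lemma:isotrivial-fibers}, Noether's formula with $\KKK_S^2=0$, the canonical bundle formula, the Euler-number bound $10$ for additive fibers, and the three-point fact in $\PGL_2(\CC)$), but your decomposition by the size and composition of $\Sigma$ is arguably tidier and isolates a reusable geometric fact.

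One step needs tightening: in the case of exactly one singular fiber over $q$, you claim the local monodromy around $q$ is trivial because $C\setminus\{q\}\cong\CC$ is simply connected. But the local system $R^1\phi_*\ZZ$ is defined only on the complement of \emph{all} non-smooth fibers, and $\Sigma$ may also contain a multiple-fiber point $p$, so the relevant fundamental group is $\uppi_1(C\setminus\{p,q\})\cong\ZZ$, not trivial. The argument survives because the local monodromy of a fiber of type ${}_mI_0$ is trivial (locally the family is a quotient of a constant family by a cyclic group acting on fibers by translations, which act trivially on $H_1$), so the local system extends across $p$ and the monodromy around $q$ is indeed forced to be trivial; but this standard fact should be stated. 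Alternatively, you can bypass monodromy altogether: with one additive fiber and only ${}_mI_0$ multiple fibers one has $\chit(S)=e(\text{additive fiber})\in\{2,3,4,6,8,9,10\}$, so $\chi(\OOO_S)=\chit(S)/12$ fails to be an integer, a contradiction --- which is in effect the integrality trick the paper uses to pin down $\chi(\OOO_S)=1$.
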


\begin{proof}
The group $\Gamma$ acts faithfully on $\PP^1$.
Assume that $\Gamma$ is infinite. Then there exists a point~\mbox{$P\in\PP^1$}
with an infinite $\Gamma$-orbit $\Xi_P\subset\PP^1$ (indeed, if the orbit of every point
is finite, $\Gamma$ contains a subgroup of finite index that fixes three points on~$\PP^1$).
The fibers of $\phi$ over all points of $\Xi_P$ are isomorphic to each other.
Since the $j$-invariant of the fibers of $\phi$
is a meromorphic function on $\PP^1$,
we conclude that this function is constant, so that the elliptic fibration
$\phi$ is isotrivial. In particular, by Lemma~\ref{lemma:isotrivial-fibers}
all multiple fibers of $\phi$
have type ${}_mI_0$.

Suppose that $\phi$ has at least one singular non-multiple fiber. Since the group $\Gamma$ is infinite,
$\phi$ has at most two singular non-multiple fibers; indeed, otherwise $\Gamma$ would have a finite invariant subset
in~$\PP^1$ of cardinality at least~$3$, which is impossible for a subgroup of~\mbox{$\Aut(\PP^1)$}.
By Lemma~\ref{lemma:isotrivial-fibers}
and Kodaira classification the topological Euler characteristic of any
singular non-multiple fiber of $\phi$ is positive and does not exceed $10$.
Thus, one has
$$
0<\chit(S)\le 20.
$$
By Noether's formula we obtain
\[
\chi(\OOO_S)=\frac{1}{12}\left(\cc_1(S)^2+\chit(S)\right)=\frac{\chit(S)}{12},
\]
which implies that $\chi(\OOO_S)=1$.

Let $F_i$ be all multiple fibers of $\phi$ (considered with reduced structure), and let $m_i$
be the multiplicity of $F_i$. By the canonical bundle formula (see~\cite[Theorem~V.12.1]{BHPV-2004})
one has
\[
\KKK_S\sim\phi^*\left(\KKK_{\PP^1}\otimes\mathcal{L}\right)\otimes\OOO_S\left(\sum (m_i-1) F_i\right),
\]
where $\mathcal{L}$ is some line bundle of degree
$\chi(\OOO_S)=1$ on $\PP^1$.
Keeping in mind that~\mbox{$\varkappa(S)=1$}, we obtain
\[
-1+\sum (1-1/m_i)=\deg \left(\KKK_{\PP^1}\otimes\mathcal{L}\right)+ \sum (1-1/m_i)> 0.
\]
Hence $\phi$ has at least two multiple fibers. Together with a non-multiple singular fiber
that exists by assumption, this gives a finite $\Gamma$-invariant subset of $\PP^1$
containing at least three points. The latter means that the group $\Gamma$ is finite, which contradicts our assumption.

Therefore, $\phi$ has no non-multiple singular fibers. Hence $\chit(S)=0$, so that Noether's
formula gives~\mbox{$\chi(\OOO_S)=0$}. By the canonical bundle formula one has
\[
-2+\sum (1-1/m_i)> 0,
\]
where $m_i$ are the multiplicities of the multiple fibers of $\phi$.
This implies that $\phi$ has at least three multiple fibers.
Thus we again obtain a finite $\Gamma$-invariant subset in $\PP^1$ containing at
least three points, which gives a contradiction.
\end{proof}

Now we are ready to consider the general case.

\begin{proof}[Proof of Proposition~\ref{proposition:elliptic-fibration-base-subgroup}]
Let $\Gamma$ be the image of the group $\Aut(S)$ in~\mbox{$\Aut(C)$}.
The group $\Gamma$ acts faithfully on the curve $C$.
In particular, if the genus $g(C)$ is at least~$2$, then $\Gamma$ is finite, because the whole
group~\mbox{$\Aut(C)$} is finite.
On the other hand, by Lemma~\ref{lemma:elliptic-fibration-base-subgroup-P1} we can assume that
$g(C)\neq 0$. Therefore, it remains to consider the case~\mbox{$g(C)=1$}.

If $\phi$ is not a smooth morphism, then the group $\Gamma$
has a non-empty finite invariant subset in $C$.
Since~\mbox{$g(C)=1$}, this implies that~$\Gamma$ is finite.

Therefore, we can assume that the morphism $\phi$ is smooth,
and thus all of its fibers are elliptic curves.
In this case we have
$\chit(S)=0$. By Noether's formula one has $\chi(\OOO_S)=0$.
By the canonical bundle formula we obtain
\[
\KKK_S\sim\phi^*\left(\KKK_C\otimes\mathcal{L}\right),
\]
where $\mathcal{L}$ is some line bundle of degree $\chi(\OOO_S)=0$ on $C$.
Hence the Kodaira dimension of $S$ is non-positive, which contradicts our assumption.
\end{proof}

\section{Proof of the main result}
\label{section:proof}

In this section we prove Theorem~\ref{theorem:BFS-for-surfaces}.

\begin{proof}[Proof of Theorem~\ref{theorem:BFS-for-surfaces}]
We may assume that the surface $S$ is minimal.
In this case~\mbox{$\Bim(S)=\Aut(S)$} by Lemma~\ref{lemma:Bir-vs-Aut}.

Suppose that $\varkappa(S)=0$.
Let us use the classification of minimal compact complex surfaces of
Kodaira dimension $0$,
see~\cite[Chapter~VI]{BHPV-2004}: the surface $S$ is either a $K3$ surface, or an Enriques surface,
or a complex torus, or a bielliptic surface, or a Kodaira surface. If $S$ is either a $K3$ surface or an Enriques surface,
then the group $\Aut(S)$ has bounded finite subgroups, see Corollary~\ref{corollary:K3} or~{\cite[Lemma~8.8]{ProkhorovShramov-CCS}}.
On the other hand, if $S$ is either a complex torus, or a bielliptic surface, or
a Kodaira surface, then the group $\Aut(S)$ has unbounded finite subgroups, see~\mbox{\cite[Theorem~1.1(i)]{Shramov-Elliptic}}.

Suppose that $\varkappa(S)=1$.
Consider the pluricanonical map $\phi\colon S\to C$. Since $\phi$ is equivariant with respect to the group $\Aut(S)$,
there is an exact sequence of groups
$$
1\to \Aut(S)_\phi\to \Aut(S)\to\Gamma,
$$
where $\Gamma$ is a subgroup of $\Aut(C)$.
According to Corollary~\ref{proposition:elliptic-fibration-base-subgroup}, the group~$\Gamma$ is finite.
This implies that the group~\mbox{$\Aut(S)$} has bounded finite subgroups if and only if
this holds for the group $\Aut(S)_\phi$.

Finally, if $\varkappa(S)=2$,
then the assertion follows from Theorem~\ref{theorem:general-type}.
\end{proof}

In the case of compact K\"ahler surfaces one can make Theorem~\ref{theorem:BFS-for-surfaces}
a bit more precise. Recall that all Kodaira surfaces are non-K\"ahler.
This follows from the observation that the first Betti number of
a (primary or secondary) Kodaira surface is always odd, see~\cite[\S\,V.5.B.Ib]{BHPV-2004}.
On the other hand, the first Betti number of a compact K\"ahler surface must be even,
see~\cite[Theorem~IV.3.1]{BHPV-2004}.
Thus, Theorem~\ref{theorem:BFS-for-surfaces}
implies

\begin{corollary}\label{corollary:BFS-for-surfaces}
Let $S$ be a compact K\"ahler surface with~\mbox{$\varkappa(S)\ge 0$}.
Suppose that the group $\Bim(S)$
has unbounded finite subgroups. Then~$S$ is bimeromorphic to a surface of one of the following types:
\begin{itemize}
\item a complex torus;

\item a bielliptic surface;

\item a surface of Kodaira dimension $1$.
\end{itemize}
Moreover, in the first two cases the group $\Bim(S)$ always has
unbounded finite subgroups.
\end{corollary}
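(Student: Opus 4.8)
The plan is to read off this Kähler refinement directly from Theorem~\ref{theorem:BFS-for-surfaces}, the only extra input being the elimination of Kodaira surfaces from the list. First I would invoke Theorem~\ref{theorem:BFS-for-surfaces}: since $\varkappa(S)\ge 0$ and $\Bim(S)$ has unbounded finite subgroups by hypothesis, $S$ is bimeromorphic to one of a complex torus, a bielliptic surface, a Kodaira surface, or a surface of Kodaira dimension $1$. The task is then to show that the Kodaira surface option is incompatible with the Kähler hypothesis on $S$, and to transfer the ``always unbounded'' conclusion for the first three types of Theorem~\ref{theorem:BFS-for-surfaces} to the two remaining types here.

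The key step is a birational invariance of the Kähler property together with a Betti-number obstruction. Since a bimeromorphic class of compact complex surfaces with $\varkappa\ge 0$ contains a unique minimal model, and being Kähler is a bimeromorphic invariant among compact complex surfaces (see~\mbox{\cite[Theorem~IV.3.1]{BHPV-2004}}), if $S$ were bimeromorphic to a Kodaira surface then $S$ itself would be a non-Kähler surface. But a (primary or secondary) Kodaira surface has odd first Betti number (see~\mbox{\cite[\S\,V.5.B.Ib]{BHPV-2004}}), whereas every compact Kähler surface has even first Betti number (see~\mbox{\cite[Theorem~IV.3.1]{BHPV-2004}}); hence $S$ cannot be bimeromorphic to a Kodaira surface. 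This removes the Kodaira surface from the list and leaves exactly the three stated types.

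For the final sentence, I would simply restrict the corresponding claim in Theorem~\ref{theorem:BFS-for-surfaces}: in the torus and bielliptic cases the group $\Bim(S)$ always has unbounded finite subgroups, so this conclusion carries over verbatim. I expect the only genuine obstacle to be the careful invocation of the Betti-parity argument already assembled in the paragraph preceding the corollary in the excerpt, so that the proof is essentially a citation of Theorem~\ref{theorem:BFS-for-surfaces} combined with that remark; no new computation is needed.
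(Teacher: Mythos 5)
Your proposal is correct and follows essentially the same route as the paper: the paper also deduces the corollary from Theorem~\ref{theorem:BFS-for-surfaces} by eliminating Kodaira surfaces via the parity of the first Betti number (odd for Kodaira surfaces by~\mbox{\cite[\S\,V.5.B.Ib]{BHPV-2004}}, even for compact K\"ahler surfaces by~\mbox{\cite[Theorem~IV.3.1]{BHPV-2004}}). Your extra remarks on the uniqueness of the minimal model and the bimeromorphic invariance of the K\"ahler property just make explicit what the paper leaves implicit, so no substantive difference remains.
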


\begin{remark}
While Theorem~\ref{theorem:BFS-for-surfaces} gives a classification 
of compact complex surfaces of non-negative Kodaira dimension 
whose bimeromorphic automorphism 
groups have bounded finite subgroups, a similar question can be 
asked about automorphism groups of minimal surfaces of negative 
Kodaira dimension. In this case it makes sense to 
investigate finiteness of the whole automorphism group. 
The automorphism groups 
of ruled surfaces were studied in~\cite{Maruyama}; 
in particular, necessary and sufficient conditions 
for their finiteness are known. In higher dimensions, there are 
some results on finiteness of automorphism groups 
of smooth Fano threefolds (see~\cite{CPS})
and certain rational affine varieties 
(see e.g.~\cite{Pukhlikov}).
\end{remark}

\section{Stabilizers of points}
\label{section:stabilizer}

In this section we prove Proposition~\ref{proposition:stabilizer-kappa-0}.
We start by deriving two corollaries from the results of Section~\ref{section:preliminaries}.

\begin{corollary}\label{corollary:bielliptic-stabilizer}
Let $S$ be a bielliptic surface, and let~$P$ be a point on~$S$.
Then the stabilizer $\Aut(S;P)$ has order at most~$36$.
\end{corollary}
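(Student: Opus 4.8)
The plan is to use the well-known structure theory of bielliptic surfaces. Recall that a bielliptic (hyperelliptic) surface $S$ admits a presentation as a quotient $(E_1\times E_2)/G$, where $E_1$ and $E_2$ are elliptic curves and $G$ is a finite abelian group acting on $E_1$ by translations and on $E_2$ faithfully by a combination of translations and multiplication by a root of unity. Equivalently, $S$ carries two natural fibrations: the Albanese map $\alpha\colon S\to E$ onto an elliptic curve, whose fibers are all isomorphic to $E_2$, and a second elliptic fibration $\beta\colon S\to \PP^1$ whose smooth fibers are isomorphic to $E_1$. Both fibrations are canonical, hence equivariant with respect to the full group $\Aut(S)$, and the classification shows $E_2$ is one of a short list of elliptic curves (those with automorphism group of order $4$ or $6$, or the generic case of order $2$), with $G$ one of exactly seven types.

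First I would set up the action of the stabilizer $\Aut(S;P)$ on these two fibrations. Since both $\alpha$ and $\beta$ are $\Aut(S)$-equivariant, any automorphism fixing $P$ must fix the two base points $\alpha(P)$ and $\beta(P)$, hence acts on the two fibers $F_\alpha=\alpha^{-1}(\alpha(P))$ and $F_\beta=\beta^{-1}(\beta(P))$ through $P$. The key point is to bound the action on each factor. The stabilizer of $\alpha(P)$ in $\Aut(E)$ is cyclic of order at most $6$ (the automorphism group of an elliptic curve fixing a point), and similarly for the action on the base $\PP^1$ of $\beta$, where the stabilizer of a point in $\Aut(\PP^1)$ is infinite but the relevant subgroup is constrained by the three or four multiple fibers of $\beta$. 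This reduces the problem to controlling the subgroup of $\Aut(S;P)$ acting trivially on both base curves, i.e. mapping $F_\alpha$ and $F_\beta$ to themselves.

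The main technical step, and the one I expect to be the genuine obstacle, is to bound this residual subgroup that fixes $P$ and preserves both fibers. Here I would apply Lemma~\ref{lemma:faithful-on-fiber}: choosing the smooth fiber $F_\beta\cong E_1$, which is non-multiple so that it contains a reduced component, any finite subgroup acting trivially on $F_\beta$ must act trivially on $S$ and hence be trivial. Thus the residual subgroup embeds faithfully into $\Aut(E_1;P)$, which is cyclic of order at most $6$. Combining the bound on the base action (at most $6$ from the elliptic base $E$, refined through the explicit list of $G$) with this faithful fiber action, and using that $E_2$ too has automorphism group of order dividing $6$, one assembles a uniform bound. The delicate part is making sure the various cyclic factors multiply to a sharp constant rather than merely a finite one.

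The cleanest route to the explicit value $36$ is to invoke the explicit classification directly: the group $\Aut(S)$ of a bielliptic surface is an extension of a finite group by an abelian variety, and the stabilizer of a point is precisely the finite ``linear part'' of this extension. In each of the seven types one reads off that the stabilizer of a point is a subgroup of a product of the cyclic automorphism groups of $E_1$ and $E_2$ fixing a point, each of order dividing $6$, giving the bound $6\cdot 6=36$. I would organize the final argument around this product decomposition, checking that no type exceeds the bound, and treating the constant $36$ as the maximum over the (few) cases rather than proving a general inequality.
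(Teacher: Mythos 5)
There is a genuine gap in the step that is supposed to kill the residual subgroup. You apply Lemma~\ref{lemma:faithful-on-fiber} to the fiber $F_\beta$ of the second fibration $\beta\colon S\to\PP^1$ through $P$, ``choosing the smooth fiber $F_\beta\cong E_1$, which is non-multiple.'' But you do not get to choose this fiber: $P$ is given, and $\beta$ \emph{always} has three or four multiple fibers (this is exactly what makes $\varkappa(S)=0$ work out in the canonical bundle formula). For $P$ lying on a multiple fiber, $F_\beta$ has no component of multiplicity one, Lemma~\ref{lemma:faithful-on-fiber} does not apply, and the reduced fiber is not $E_1$ but a quotient of it, so the claimed embedding of the residual subgroup into $\Aut(E_1;P)$ fails precisely at those points. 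Your fallback --- reading the bound off the classification $(E_1\times E_2)/G$ --- is asserted rather than proved: to identify $\Aut(S;P)$ with a subgroup of $\Aut(E_1;x)\times\Aut(E_2;y)$ you would need (a) that every automorphism of $S$ lifts to the cover $E_1\times E_2$, and (b) that a lift fixing a point respects the product decomposition; (b) is false in general for abelian surfaces with isogenous factors, where origin-preserving automorphisms include shears $(x,y)\mapsto(x+\varphi(y),y)$, $\varphi\in\Hom(E_2,E_1)$, so one must check these do not normalize $G$. None of this is carried out, and the bound on the image in $\Aut(\PP^1)$ via the multiple-fiber configuration is likewise left open.

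The fix is already contained in your own setup, and it is what the paper does: work only with the Albanese fibration $\phi\colon S\to C$ onto the elliptic curve $C$. The paper first shows $\phi$ has \emph{no} multiple fibers (from $\chit(S)=0$, Noether's formula gives $\chi(\OOO_S)=0$, and then the canonical bundle formula forces all fibers to be non-multiple); every fiber $F$ is then an elliptic curve, so Lemma~\ref{lemma:faithful-on-fiber} applies at \emph{every} point $P$ and yields an injection
$$
\sigma\colon\Aut(S;P)\hookrightarrow\Aut\bigl(C;\phi(P)\bigr)\times\Aut(F;P),
$$
where both factors are point stabilizers on elliptic curves, hence of order at most $6$, giving $36$ directly. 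Had you applied the faithfulness lemma to the Albanese fiber $F_\alpha$ (which you yourself note is always smooth and isomorphic to $E_2$) instead of $F_\beta$, and bounded the base action inside $\Aut(C;\phi(P))$ rather than inside $\Aut(\PP^1)$, your argument would close without invoking the second fibration or the seven-type classification at all.
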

\begin{proof}
Consider the Albanese map
$\phi\colon S\to C$.
This map is equivariant with respect to the group $\Aut(S)$.
Recall that $\chit(S)=0$, so that by Noether's formula we have~\mbox{$\chi(\OOO_S)=0$}.
Therefore, it follows from the canonical bundle formula
that $\phi$ has no multiple fibers.

Let $F$ be the fiber of the morphism~$\phi$ passing through~$P$.
Consider the natural homomorphism
$$
\sigma\colon\Aut(S;P)\to\Aut(C)\times\Aut(F).
$$
It follows from Lemma~\ref{lemma:faithful-on-fiber} that the homomorphism $\sigma$ is injective.
Furthermore, its image is contained in the subgroup
$$
\Aut(C;\phi(P))\times\Aut(F;P)\subset\Aut(C)\times\Aut(F).
$$
Since both $C$ and $F$ are elliptic curves, we see that
$$
|\Aut(S;P)|\le |\Aut(C;P)\times\Aut(F;P)|\le 36.
$$
\end{proof}

\begin{corollary}\label{corollary:Kodaira-stabilizer}
Let $S$ be a Kodaira surface, and let~$P$ be a point on~$S$.
Let $G$ be a finite subgroup in the stabilizer $\Aut(S;P)$.
Then $G$ has order at most~$6$ if $S$ is a primary Kodaira surface, and at most $36$ if~$S$ is a secondary Kodaira surface.
\end{corollary}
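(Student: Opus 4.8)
The plan is to treat the two cases separately, using the algebraic reduction $\phi\colon S\to C$ (which is equivariant with respect to $\Aut(S)$) together with the lemmas already proved. In both cases I write $\Gamma$ for the image of $\Aut(S)$ in $\Aut(C)$.

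For a primary Kodaira surface $C$ is an elliptic curve, and $|\Gamma|\le 6$ by Lemma~\ref{lemma:primary-Kodaira-Aut}. First I would intersect $G$ with the kernel $\Aut(S)_{\phi}$ of the action on the base, setting $G_0=G\cap\Aut(S)_{\phi}$. Since $G_0$ is a finite subgroup of $\Aut(S)_{\phi}$ fixing the point $P$, Lemma~\ref{lemma:primary-Kodaira-fiber} forces $G_0$ to be trivial, because every nontrivial finite subgroup of $\Aut(S)_{\phi}$ acts on $S$ without fixed points. Hence the projection $\Aut(S)\to\Gamma$ restricts to an embedding of $G$ into $\Gamma$, and $|G|\le 6$.

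For a secondary Kodaira surface the base $C\cong\PP^1$ is no longer an elliptic curve, so there is no immediate fixed\nobreakdash-point\nobreakdash-free statement for the fibre through $P$ (which may be a multiple fibre, to which Lemma~\ref{lemma:faithful-on-fiber} does not apply). Instead I would reduce to the primary case via the canonical unramified covering $\pi\colon\tilde S\to S$ by a primary Kodaira surface, whose degree $m$ is at most $6$ and whose deck group $N\cong\ZZ/m\ZZ$ acts freely on $\tilde S$. Let $G'\subset G$ be the subgroup of those elements that lift to holomorphic automorphisms of $\tilde S$; I expect $[G:G']$ to be at most $m\le 6$, controlled by the way $G$ permutes the covering. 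The holomorphic lifts of the elements of $G'$ assemble into a finite group $\tilde G'\subset\Aut(\tilde S)$ fitting in an extension $1\to N\to\tilde G'\to G'\to 1$. Choosing $\tilde P\in\pi^{-1}(P)$ and setting $H=\operatorname{Stab}_{\tilde G'}(\tilde P)$, the freeness of $N$ gives $H\cap N=\{1\}$, so $H$ injects into $G'$; since $N$ acts simply transitively on the $m$\nobreakdash-point fibre $\pi^{-1}(P)$, the $\tilde G'$\nobreakdash-orbit of $\tilde P$ is all of $\pi^{-1}(P)$ and $H$ maps isomorphically onto $G'$. As $H\subset\Aut(\tilde S;\tilde P)$, the primary case gives $|G'|=|H|\le 6$, whence $|G|=[G:G']\,|G'|\le 6\cdot 6=36$.

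The main obstacle is the lifting step for the secondary case: one must verify that the elements of $G$ (at least those in a subgroup of index at most $m$) lift to holomorphic automorphisms of the primary Kodaira cover, which amounts to showing that the finite\nobreakdash-index subgroup of $\pi_1(S)$ defining $\tilde S$ is preserved by the induced automorphisms of the fundamental group, and to bounding the orbit of this subgroup by $m$. This is where the explicit description of $\pi_1$ of a secondary Kodaira surface and of its canonical primary cover (as in~\cite[\S\,V.5]{BHPV-2004}) enters; if that subgroup turns out to be characteristic one even obtains the sharper bound $6$, but in any case the crude estimate $36$ follows. Once this functoriality is in place, the bookkeeping with the extension $1\to N\to\tilde G'\to G'\to 1$ and the free deck action is routine.
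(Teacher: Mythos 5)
Your argument follows the paper's proof almost step for step. The primary case is identical: the paper also sets $G'=G\cap\Aut(S)_\phi$, bounds $[G:G']\le 6$ by Lemma~\ref{lemma:primary-Kodaira-Aut}, and kills $G'$ by Lemma~\ref{lemma:primary-Kodaira-fiber} because $G'$ fixes $P$. The secondary case is also the paper's reduction: pass to the degree~$\le 6$ cover $\theta\colon\tilde S\to S$ by a primary Kodaira surface, lift the group, and apply the primary case to the stabilizer of a point of $\theta^{-1}(P)$. The one step you flag as the ``main obstacle'' --- that automorphisms of $S$ lift to $\tilde S$ --- is exactly what the paper supplies, and it requires no explicit description of $\pi_1(S)$: the cover is canonical because the class $[\KKK_S]\in H^2(S,\ZZ)$ is $n$-torsion with $n\in\{2,3,4,6\}$, so that $\tilde S$ is the analytic spectrum of the canonical $\OOO_S$-algebra $\oplus_{i=0}^{n-1}\KKK_S^{\otimes i}$; equivalently, the index-$n$ subgroup of $\pi_1(S)$ defining $\theta$ corresponds, via the universal coefficient theorem, to the torsion class of $\KKK_S$ in $H_1(S,\ZZ)$, and is therefore preserved by every automorphism. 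Hence your $G'$ is all of $G$ and the worry disappears. By contrast, your fallback estimate $[G:G']\le m$ is not justified as stated: $[G:G']$ equals the size of the $G$-orbit of the subgroup $\theta_*\pi_1(\tilde S)$ among the index-$m$ subgroups of $\pi_1(S)$, and there is no a priori bound of $m$ on that orbit; so without the canonicity observation your proof would be genuinely incomplete rather than merely crude.

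Once that gap is filled, your bookkeeping is in fact tighter than the paper's. The paper bounds $|G|\le|\tilde G|\le 6|\tilde G_0|\le 36$, where $\tilde G$ is a finite preimage of $G$ in $\Aut(\tilde S)$ and $\tilde G_0$ the stabilizer of a preimage $\tilde P_0$ of $P$. Your orbit--stabilizer argument is correct: since the deck group $N$ acts simply transitively on $\theta^{-1}(P)$, the stabilizer $H$ of $\tilde P$ in the group of all lifts maps isomorphically onto $G'$; combined with $G'=G$ this gives $|G|=|H|\le 6$ even for secondary Kodaira surfaces, which is stronger than the stated bound of $36$ (and consistent with the paper's remark that the sharpness of these bounds is unknown).
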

\begin{proof}
First suppose that $S$ is a primary Kodaira surface. Consider the algebraic reduction~\mbox{$\phi\colon S\to C$},
and set $G'=G\cap\Aut(S)_\phi$.
By Lemma~\ref{lemma:primary-Kodaira-Aut} the subgroup $G'\subset G$
has index at most~$6$.
Let $F$ be a fiber of the morphism $\phi$ passing through the point $P$.
By Lemma~\ref{lemma:primary-Kodaira-fiber} every non-trivial element of the group $G'$ acts on $F$
without fixed points. Therefore, the group $G'$ is trivial, and $|G|\le 6$.

Now suppose that $S$ is a secondary Kodaira surface. Then there exists a canonical finite cover
$$
\theta\colon \tilde{S}\to S,
$$
where $\tilde{S}$ is a primary Kodaira surface, and the degree of~$\theta$ is at most~$6$.
In fact, in this case the  class $[\KKK_S]\in H^2(S,\ZZ)$ of the canonical line bundle $\KKK_S$ is an $n$-torsion element
with $n=2$, $3$, $4$, or $6$, see~\mbox{\cite[\S\,VI.1]{BHPV-2004}}.
By the universal coefficient theorem it defines an $n$-torsion element in~\mbox{$H_1(S,\ZZ)$}
and so there exists a canonically defined subgroup of index~$n$ in the fundamental group $\uppi_1(S)$ which
in turns defines our cover~$\theta$. In other words,
the surface $\tilde S$ is the analytic spectrum $\operatorname{Spec_an}(\mathcal {R})$
of the canonical $\OOO_S$-algebra
$$
\mathcal {R}=\oplus_{i=0}^{n-1} \KKK_S^{\otimes i}.
$$

Since $\theta$ is canonically defined, there is a surjective homomorphism~\mbox{$\Aut(\tilde{S})\to\Aut(S)$}.
In particular, there exists a finite subgroup~\mbox{$\tilde{G}\subset\Aut(\tilde{S})$} with
a surjective homomorphism onto $G$.
Let~$\tilde{P}_0$ be one of the preimages of the point $P$ on $\tilde{S}$. Then~$\tilde{G}$ contains a subgroup~$\tilde{G}_0$ of index at most~$6$
such that $\tilde{G}$ fixes the point~$\tilde{P}_0$. As we already proved above, one has
$|\tilde{G}_0|\le 6$, and hence
$$
|G|\le |\tilde{G}|\le 6|\tilde{G}_0|\le 36.  \qedhere
$$
\end{proof}

\begin{remark}
We do not know if the bounds obtained in Corollaries~\ref{corollary:bielliptic-stabilizer}
and~\ref{corollary:Kodaira-stabilizer} are sharp.
\end{remark}

Now we will complete the case of Kodaira dimension~$0$.

\begin{proof}[Proof of Proposition~\ref{proposition:stabilizer-kappa-0}]
Let $S$ be a compact complex surface of Kodaira dimension~$0$.
Consider the minimal model  $S'$ of the surface~$S$. Then~\mbox{$\Aut(S)\subset\Bim(S')$}; hence by Lemma~\ref{lemma:Bir-vs-Aut} there is
an embedding~\mbox{$\Aut(S)\subset\Aut(S')$}.
Consider the bimeromorphic morphism~\mbox{$\pi\colon S\to S'$}.
Since the minimal model $S'$ is unique, the morphism~$\pi$ is equivariant with respect to the group $\Aut(S)$.
Therefore, the image~\mbox{$\pi(P)$} of the point $P$ is invariant under the group $\Aut(S;P)$.
Thus we can assume from the very beginning that the surface $S$ is minimal.

Let us use the classification of minimal compact complex surfaces of Kodaira dimension $0$.
If $S$ is either a $K3$ surface or an Enriques surface,
the assertion follows from Corollary~\ref{corollary:K3}. If $S$ is a complex torus,
the assertion follows from Corollary~\ref{corollary:torus-stabilizer}.
If $S$ is a bielliptic surface, the assertion follows from Corollary~\ref{corollary:bielliptic-stabilizer}.
Finally, if  $S$ is a Kodaira surface, the assertion follows from Corollary~\ref{corollary:Kodaira-stabilizer}.
\end{proof}

\section{K\"ahler manifolds}
\label{section:Kahler}

In this section we prove Theorem~\ref{theorem:Kahler-stabilizer}.
Given a complex variety $X$, by $\Aut^0(X)$ we will denote the connected component of identity in
the complex Lie group~\mbox{$\Aut(X)$}.

\begin{theorem}[{see \cite[Corollary~5.11]{Fujiki}}]
\label{theorem:Fujiki}
Let $X$ be a compact K\"ahler manifold of non-negative
Kodaira dimension. Then the group
$\Aut^0(X)$ either is trivial, or is a complex torus.
\end{theorem}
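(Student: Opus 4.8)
The plan is to reduce the statement to the claim that $\Aut^0(X)$ is compact, and then to rule out non-compactness using the hypothesis $\varkappa(X)\ge 0$. First I would record the general fact that a connected compact complex Lie group $G$ is automatically a complex torus: since holomorphic functions on the compact group $G$ are constant, the adjoint representation $\mathrm{Ad}\colon G\to\GL(\mathfrak g)$ is constant, so $G$ is abelian; the exponential map then exhibits $G$ as $\mathfrak g/\Lambda$ for a discrete subgroup $\Lambda$, and compactness forces $\Lambda$ to have full rank, so $G$ is a complex torus. As $\Aut^0(X)$ is a connected complex Lie group (with Lie algebra the space of holomorphic vector fields $H^0(X,T_X)$), it therefore suffices to prove that $\Aut^0(X)$ is either trivial or compact.

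Next I would invoke the structure theory of automorphism groups of compact K\"ahler manifolds. The Albanese map $\Alb\colon X\to \Alb(X)$ is equivariant, and since $\Aut^0(X)$ is connected while $\Aut^0(\Alb(X))$ consists of translations, it induces a homomorphism $\beta\colon\Aut^0(X)\to \Alb(X)$ whose image is a subtorus $T\subseteq\Alb(X)$. The content of the Lieberman--Fujiki structure theorem is that the kernel $L=\ker\beta$ is a connected linear algebraic group, so that there is an exact sequence $1\to L\to\Aut^0(X)\to T\to 1$. Since $T$ is compact and an extension of a compact group by a compact group is compact, the group $\Aut^0(X)$ is compact as soon as $L$ is trivial; conversely a nontrivial connected linear algebraic group is non-compact. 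Thus the whole problem reduces to showing that $L$ is trivial.

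Finally I would derive a contradiction from $L\neq 1$ using $\varkappa(X)\ge 0$. A nontrivial connected linear algebraic group contains a closed one-dimensional subgroup isomorphic either to $\CC$ (under addition) or to $\CC^{*}$ (under multiplication), and this produces a nontrivial holomorphic action of $\CC$ or $\CC^{*}$ on $X$. For such an action the fixed locus is a proper analytic subset, being the zero locus of a nonzero holomorphic vector field, so the non-fixed points form a dense open set. The closure of a one-dimensional orbit is an irreducible curve whose normalization receives a dense holomorphic image of $\CC$ or $\CC^{*}$ with finite complement; by the removable singularity theorem this normalization is $\PP^1$, so every such orbit closure is a rational curve. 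Hence $X$ is covered by rational curves, i.e.\ uniruled, and therefore $\varkappa(X)=-\infty$, contradicting the hypothesis. Consequently $L=1$, and $\Aut^0(X)$ is trivial or a complex torus. The main obstacle is the structure theorem of the second step: everything else is either a standard fact about compact complex Lie groups or the elementary rationality-of-orbit-closures argument, whereas the linear algebraicity of $\ker\beta$ is the genuinely deep input, and this is precisely the place where the K\"ahler hypothesis enters through the work of Lieberman and Fujiki.
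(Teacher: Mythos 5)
The paper gives no proof of this theorem---it is quoted directly from Fujiki's paper---and your outline is in essence a reconstruction of Fujiki's own argument: the Lieberman--Fujiki exact sequence $1\to L\to\Aut^0(X)\to T\to 1$ over a subtorus $T$ of the Albanese torus, followed by a Matsumura-type argument that a nontrivial linear part $L$ forces uniruledness, hence $\varkappa(X)=-\infty$. The architecture is therefore the right one. However, one step is genuinely false as you stated it: a nontrivial \emph{holomorphic} action of $\CC$ or $\CC^*$ on a compact K\"ahler manifold need not have analytic orbit closures at all, let alone rational curves as closures. Take a complex torus $T=\CC^2/\Lambda$ and a primitive vector $v\in\Lambda$ with $\CC v\cap\Lambda=\ZZ v$; then $z\mapsto [zv]$ gives an injective homomorphism $\CC^*\cong\CC/\ZZ\hookrightarrow T$, and the translation action of this $\CC^*$ on $T$ has orbit closures that are generically real subtori of dimension $3$ or $4$, not complex curves. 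Since tori have $\varkappa=0$, this is precisely a case your theorem must allow, so your step ``the closure of a one-dimensional orbit is an irreducible curve'' cannot follow from holomorphy alone---the argument proves too much. What rescues it is a further clause of the Lieberman--Fujiki theorem that you did not invoke: the linear part $L$ acts \emph{meromorphically}, i.e., the action map extends meromorphically to a projective compactification of $L$. Equivalently, the orbit map of a one-parameter algebraic subgroup of $L$ isomorphic to $\CC$ or $\CC^*$ extends to a holomorphic map $\PP^1\to X$, whose image is a point or a rational curve; only then do orbit closures through a dense set of points become rational curves. (In the torus example above, the $\CC^*$ lies in the Albanese part, not in $L$, and its orbit maps visibly do not extend to $\PP^1$.)

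Two secondary soft spots in the same vein. First, your claim that the image of $\beta\colon\Aut^0(X)\to\Alb(X)$ is a subtorus is not formal: a connected complex Lie subgroup of a torus need not be closed (the same one-parameter subgroups as above), and the closedness of the image is itself part of the structure theorem. Second, the implication ``covered by rational curves $\Rightarrow$ uniruled $\Rightarrow$ $\varkappa(X)=-\infty$'' needs an argument in the K\"ahler category: a merely dominant meromorphic map does not bound $\varkappa$ (project from $X\times\PP^1$), so one must package the orbit closures into a family, e.g.\ via a compact component $Y$ of the Barlet cycle space, to get a \emph{generically finite} dominant meromorphic map from $\PP^1\times Y$ and then kill pluricanonical sections by pullback. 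Both repairs draw on the same Fujiki--Lieberman machinery you already cite, so the gaps are fillable, but as written the crucial rationality-of-orbit-closures step is incorrect.
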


\begin{lemma}
\label{lemma:torus-action}
Let $X$ be a compact complex manifold. Suppose that $X$ is non-trivially acted on by a complex torus $T$.
Then $T$ has no fixed points on $X$.
\end{lemma}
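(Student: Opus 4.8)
The plan is to argue by contradiction. Suppose the $T$-action has a fixed point $P\in X$; I will show that then every element of $T$ acts as the identity near $P$, hence on all of $X$ by connectedness, contradicting non-triviality. The driving mechanism is that the isotropy action of $T$ on the jets of $X$ at $P$ is forced to be trivial: for each $k$ the group $T$ acts linearly on the finite-dimensional vector space of $k$-jets $J^k=\OOO_{X,P}/\mathfrak{m}_P^{k+1}$, giving a holomorphic homomorphism $T\to\GL(J^k)$; but every global holomorphic function on the compact connected complex manifold $T$ is constant by the maximum principle, so all matrix coefficients of this homomorphism are constant, and evaluating at the identity of $T$ shows the homomorphism is trivial. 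Thus $T$ acts trivially on $J^k$ for all $k$, i.e.\ every automorphism in the image of $T$ agrees with $\mathrm{id}$ to infinite order at $P$.

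Concretely, I would choose local holomorphic coordinates $z=(z_1,\dots,z_n)$ centered at $P$ on a neighborhood $U$, and (shrinking $U$, using compactness of $T$) write the action as a jointly holomorphic map $\Phi\colon T\times U'\to U$, $\Phi(t,z)=t\cdot z$, with $\Phi(t,0)=0$ for all $t$. Expanding $\Phi(t,\cdot)$ as a power series in $z$ at the origin, each Taylor coefficient is a global holomorphic $\CC^n$-valued function of $t\in T$, hence constant; evaluating at the identity element $e\in T$, where $\Phi(e,\cdot)=\mathrm{id}$, pins these constants to those of the identity map. Therefore $t\cdot(-)$ has the same Taylor expansion at $P$ as $\mathrm{id}$ for every $t$. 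By the identity theorem two holomorphic maps with equal Taylor expansion at $P$ coincide on a neighborhood of $P$, and since $X$ is connected they coincide on all of $X$. Hence $t\cdot x=x$ for every $x\in X$ and every $t\in T$, so the action is trivial, a contradiction; consequently $T$ has no fixed point.

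The crux of the argument, and the one place where compactness of $T$ enters, is the step asserting that the holomorphic coefficient functions on $T$ are constant. This is exactly what fails for non-compact groups: the flow $\phi_s(z)=z/(1-sz)$ of the vector field $z^2\,\partial/\partial z$ gives a non-trivial action of $\CC$ on $\PP^1$ fixing $z=0$ with trivial isotropy action on the tangent line. Thus it is the compactness of $T$, rather than its abelianness, that drives the conclusion, and the main care in writing the proof will be to isolate this point cleanly (for instance by phrasing it as the triviality of a holomorphic representation of a compact complex torus into a general linear group).
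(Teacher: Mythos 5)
Your proof is correct, and it takes a genuinely different route from the paper's. The paper disposes of the lemma in two lines via the rigidity theorem: the action map $\Psi\colon T\times X\to X$ contracts the slice $T\times\{P\}$ to a point, so by rigidity (\cite[Theorem~5.23]{Kollar-Structure}, using compactness of $T$) it contracts every slice $T\times\{Q\}$, i.e.\ the action is trivial --- contradiction. You instead re-prove a linearization-type rigidity statement at the fixed point by hand: the jet representations $T\to\GL(J^k)$ are holomorphic homomorphisms from a compact connected complex Lie group, hence trivial since holomorphic functions on $T$ are constant, and the identity theorem then propagates triviality of the infinite jet at $P$ to all of $X$ (connected, by the paper's convention that complex manifolds are irreducible). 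The two mechanisms are cousins --- rigidity itself rests on compactness of the contracted factor --- but yours is self-contained and elementary, and it isolates exactly where compactness of $T$ enters; your counterexample of the flow of $z^2\,\partial/\partial z$ on $\PP^1$, where the first-order isotropy at $0$ is trivial but the second-order jet is not, illustrates well why the argument must go through all jets and fails for $\CC$-actions. Both proofs in fact establish the stronger statement that a single fixed point forces the whole action to be trivial, and both use only compactness and connectedness of $T$, not commutativity. When writing yours up, the two points to make explicit are the ones you already flag: the uniform neighborhood $U'$ with $\Psi(T\times U')\subset U$ (obtained from compactness of $T$), and the standard fact that the locus where two holomorphic self-maps agree to infinite order is open and closed. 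The paper's route buys brevity at the cost of an external citation; yours buys a transparent, self-contained argument at the cost of some bookkeeping with Taylor coefficients.
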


\begin{proof}
The action of $T$ on $X$ is given by a morphism
$$
\Psi\colon T\times X\to X.
$$
Suppose that some point $P\in X$
is fixed by $T$.
Then the image~\mbox{$\Psi(T\times\{P\})$} is a point. On the other hand, since the action of
$T$ on $X$ is non-trivial, for a typical point $Q\in X$
the image $\Psi(T\times\{Q\})$ is not a point.
This is impossible by the rigidity theorem, see~\cite[Theorem~5.23]{Kollar-Structure}.
\end{proof}

\begin{corollary}
\label{corollary:torus-action}
Let $X$ be a compact K\"ahler manifold of non-negative Kodaira dimension, and let $P$ be a point on $X$.
Then the group
$$
\Aut^0(X;P)=\Aut(X;P)\cap \Aut^0(X)
$$
is finite. Moreover, there exists a constant $B=B(X)$ that does not depend on $P$,
such that~\mbox{$|\Aut^0(X;P)|\le B$}.
\end{corollary}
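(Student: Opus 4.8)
The plan is to use Fujiki's theorem to reduce to the case where $\Aut^0(X)$ is a complex torus, and then to combine Lemma~\ref{lemma:torus-action} with a finiteness argument for a suitable incidence variety. First, by Theorem~\ref{theorem:Fujiki} the group $T:=\Aut^0(X)$ is either trivial or a complex torus. If it is trivial there is nothing to prove, so I would assume that $T$ is a complex torus; note that $T$ acts faithfully on $X$, since $T\subset\Aut(X)$.

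For the finiteness statement, I would observe that the stabilizer $\Aut^0(X;P)=\{t\in T: t\cdot P=P\}$ is the preimage of $P$ under the holomorphic orbit map $t\mapsto t\cdot P$, hence a closed complex Lie subgroup of $T$. Since $T$ is compact, this stabilizer is compact, and its identity component, being a closed connected complex Lie subgroup of a complex torus, is itself a complex subtorus $T'\subseteq T$. If $T'$ were positive-dimensional it would be a non-trivial complex torus acting on $X$, and this action would be non-trivial because $T$ acts faithfully; Lemma~\ref{lemma:torus-action} would then force $T'$ to act without fixed points, contradicting $T'\cdot P=P$. Hence $\dim T'=0$, and the compact group $\Aut^0(X;P)$ is finite.

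To obtain a bound independent of $P$, I would introduce the incidence variety
$$
Z=\{(t,x)\in T\times X: t\cdot x=x\},
$$
which is a closed analytic subset of the compact space $T\times X$, hence compact, and consider the projection $p\colon Z\to X$. This morphism is proper, and its fiber $p^{-1}(P)$ is exactly the stabilizer $\Aut^0(X;P)$, which we have just shown to be finite; thus $p$ is a finite morphism. The sheaf $p_*\OOO_Z$ is then coherent on $X$, and the function $x\mapsto\dim_{\CC}\big((p_*\OOO_Z)\otimes\CC(x)\big)$ is upper semicontinuous, hence bounded above by some constant $B=B(X)$ on the compact manifold $X$. Since the number of points of $p^{-1}(x)$ is at most this fiber dimension, we conclude that $|\Aut^0(X;P)|=|p^{-1}(P)|\le B$ for every point $P$, as required.

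The main obstacle is the uniform bound rather than the finiteness. The subtle point is that the raw fiber cardinality of a finite morphism is \emph{not} upper semicontinuous, since it may jump up under specialization (as for the normalization of a nodal curve over its node); the argument must therefore be run through the fiber length $\dim_{\CC}\big((p_*\OOO_Z)\otimes\CC(x)\big)$, which is upper semicontinuous and dominates the cardinality. The finiteness itself is comparatively direct once Lemma~\ref{lemma:torus-action} is combined with the fact that closed connected complex Lie subgroups of a complex torus are complex subtori.
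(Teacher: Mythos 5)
Your proof is correct, and its skeleton is the same as the paper's: finiteness of $\Aut^0(X;P)$ follows from Theorem~\ref{theorem:Fujiki} together with Lemma~\ref{lemma:torus-action} (the paper phrases this as ``an infinite closed subgroup of $\Aut^0(X)$ contains a complex torus'', which is precisely your identity-component-is-a-subtorus argument), and the uniform bound uses the same incidence variety with its finite projection to $X$. Where you genuinely diverge is the final step. The paper decomposes $\Xi$ into its finitely many irreducible components and bounds the number of points in a fiber by the degree of $\pi\colon\Xi\to\pi(\Xi)$ on each component; you instead use coherence of $p_*\OOO_Z$ (via the finite mapping theorem) and bound the fiber cardinality by the fiber length $\dim_{\CC}\bigl((p_*\OOO_Z)\otimes\CC(x)\bigr)$, which is upper semicontinuous and hence bounded on the compact $X$. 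Your route is in fact the more robust one: for a finite map of irreducible complex spaces the fiber cardinality can exceed the generic degree at special points --- the normalization of a nodal curve has degree $1$ but a two-point fiber over the node --- so the paper's closing sentence is slightly informal as stated, and your length argument supplies the missing precision.

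One small correction to your motivating remark: the nodal-curve example illustrates the failure of the na\"ive degree bound, not the failure of upper semicontinuity of the fiber cardinality. A jump \emph{up} at the special point is perfectly consistent with upper semicontinuity; cardinality fails to be upper semicontinuous when it \emph{drops} at special points, as for a branched cover such as $z\mapsto z^2$ at the branch point, where the fiber over the origin has one point but nearby fibers have two. This slip is confined to your commentary and does not affect the proof, since the length function you actually use dominates the cardinality everywhere and is genuinely upper semicontinuous.
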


\begin{proof}
If the group~\mbox{$\Aut^0(X)$} is trivial, there is nothing to prove.
Thus we will assume that~\mbox{$\Aut^0(X)$} is non-trivial,
and hence it is a complex torus by Theorem~\ref{theorem:Fujiki}.

Suppose that the group $\Aut^0(X;P)$ is infinite.
Since it is a closed subgroup
in~\mbox{$\Aut^0(X)$}, it contains some complex torus~$T$.
Thus~$T$ acts on $X$ with the fixed point $P$,
which contradicts Lemma~\ref{lemma:torus-action}.
Therefore, the group $\Aut^0(X;P)$ is finite.

Now consider the incidence relation
$$
\Xi=\{(\sigma,Q)\mid \sigma(Q)=Q\}\subset \Aut^0(X)\times X,
$$
and denote by $\pi\colon \Xi\to X$ the projection on the second factor.
Then~$\Xi$ is a (possibly reducible) compact complex space, and
a fiber of~$\pi$ over a point $P$ is exactly the subgroup~\mbox{$\Aut^0(X;P)$}.
The projection $\pi$ is a proper  map.
Since the fibers of $\pi$ are finite, we conclude that the map $\pi$ is finite.
We claim that the number of points in
the fibers of $\pi$ is bounded by some constant $B=B(X)$.
Indeed, since the number of irreducible components of $\Xi$ is finite,
we may replace $\Xi$ by its irreducible component. Now the number of points in the fiber
is bounded by the degree of the map $\pi\colon\Xi\to\pi(\Xi)$.
\end{proof}

Now we are ready to prove the main result of this section.

\begin{proof}[Proof of Theorem~\ref{theorem:Kahler-stabilizer}]
According to Corollary~\ref{corollary:torus-action}, the intersection~\mbox{$\Aut(X;P)\cap \Aut^0(X)$} has bounded order.
Hence it is enough to check that the image of $\Aut(X;P)$ in the quotient group
$$
\Upsilon=\Aut(X)/\Aut^0(X)
$$
has bounded finite subgroups.
On the other hand, by \cite[Lemma~3.1]{Kim} the whole group $\Upsilon$ has bounded finite subgroups.
\end{proof}

\begin{corollary}
Let $S$ be a minimal compact K\"ahler surface of Kodaira dimension~$1$.
Suppose that the pluricanonical fibration~\mbox{$\phi\colon S\to C$}
has a singular fiber. Then the group $\Bim(S)$ has bounded finite subgroups.
\end{corollary}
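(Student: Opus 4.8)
The plan is to reduce the boundedness of finite subgroups of $\Bim(S)$ to that of a point stabilizer, and then to invoke Theorem~\ref{theorem:Kahler-stabilizer}. Since $S$ is minimal of Kodaira dimension $1$, it is non-ruled, so $\Bim(S)=\Aut(S)$ by Lemma~\ref{lemma:Bir-vs-Aut}. By Proposition~\ref{proposition:elliptic-fibration-base-subgroup} the image $\Gamma$ of $\Aut(S)$ in $\Aut(C)$ is finite, so from the exact sequence
$$
1\to\Aut(S)_\phi\to\Aut(S)\to\Gamma
$$
I see that it is enough to bound finite subgroups of $\Aut(S)_\phi$: if finite subgroups of $\Aut(S)_\phi$ have order at most $B$, then any finite $G\subset\Aut(S)$ satisfies $|G|\le|\Gamma|\cdot B$.

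To bound $\Aut(S)_\phi$ I would use the singular fiber $F$ to manufacture a canonical finite invariant set of points. Because $F$ is singular, the reduced curve $F_{\mathrm{red}}$ is a singular compact complex curve, and hence its singular locus $\Sigma=\Sing(F_{\mathrm{red}})$ is a non-empty finite set. Every element of $\Aut(S)_\phi$ carries $F$ to itself by definition, hence preserves $F_{\mathrm{red}}$ and the finite set $\Sigma$. This gives a homomorphism $\Aut(S)_\phi\to\operatorname{Sym}(\Sigma)$ to the symmetric group on $\Sigma$, whose image is finite. Let $K$ be its kernel and fix a point $P\in\Sigma$; then $K$ fixes $P$, so $K\subset\Aut(S;P)$.

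Finally, Theorem~\ref{theorem:Kahler-stabilizer}, which is where the K\"ahler hypothesis enters, shows that $\Aut(S;P)$, and therefore its subgroup $K$, has bounded finite subgroups. Since $K$ has finite index (at most $|\Sigma|!$) in $\Aut(S)_\phi$, the group $\Aut(S)_\phi$ has bounded finite subgroups as well; combined with the reduction above, this gives the claim for $\Bim(S)=\Aut(S)$.

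The step I expect to be the crux is producing the non-empty canonical finite $\Aut(S)_\phi$-invariant subset, which is exactly where the singular-fiber hypothesis is indispensable: on a smooth reduced fiber the group $\Aut(S)_\phi$ can act by an infinite group of fiberwise translations, and nothing canonically pins down a point to feed into Theorem~\ref{theorem:Kahler-stabilizer}. In this argument I interpret a ``singular fiber'' as a fiber whose reduced structure is geometrically singular, so that $\Sigma\neq\emptyset$. A multiple fiber of type ${}_mI_0$ has smooth reduction and supplies no such point; if such fibers must be covered, they would require a separate treatment, and I would single this subtlety out as the one place where the present sketch would need to be supplemented.
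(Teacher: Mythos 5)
Your argument is correct and follows essentially the same route as the paper: the paper likewise reduces everything to Theorem~\ref{theorem:Kahler-stabilizer} via a finite $\Aut(S)$-invariant set of singular points of singular fibers, except that it skips your detour through Proposition~\ref{proposition:elliptic-fibration-base-subgroup} and $\Aut(S)_\phi$ by letting the whole group $\Aut(S)$ act on the set $\Sigma$ of singular points of \emph{all} singular fibers, so that $\Aut(S;P)$ has index at most $|\Sigma|$ for $P\in\Sigma$. The caveat you flag about multiple fibers of type ${}_mI_0$ (smooth reduction, hence no distinguished points) is a genuine subtlety that applies equally to the paper's proof, which tacitly interprets ``singular fiber'' as a fiber with singular support, exactly as you do.
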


\begin{proof}
One has~\mbox{$\Bim(S)=\Aut(S)$} by Lemma~\ref{lemma:Bir-vs-Aut}.
Consider the set~\mbox{$\Sigma\subset S$} of singular points of all singular fibers of $\phi$, and choose a point~\mbox{$P\in\Sigma$}.
Since $\phi$ is equivariant with respect to the group
$\Aut(S)$, this group acts on $\Sigma$.
Hence the stabilizer $\Aut(S;P)$ of the point $P$ has index at most $|\Sigma|$ in $\Aut(S)$.
Now it remains to apply Theorem~\ref{theorem:Kahler-stabilizer}.
\end{proof}

\section{Abelian subgroups}
\label{section:Jordan}

In this section we prove Proposition~\ref{proposition:uniformly-Jordan}.
We start with one of its particular cases that we treat using the results of Section~\ref{section:preliminaries}.

\begin{corollary}\label{corollary:Kodaira-abelian}
Let $S$ be a Kodaira surface, and let $G$ be a finite subgroup in $\Aut(S)$.
Then $G$ contains an abelian subgroup of index at most $6$.
\end{corollary}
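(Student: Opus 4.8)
The plan is to treat the primary and secondary cases separately, reducing the latter to the former by means of the canonical cover. Suppose first that $S$ is a primary Kodaira surface, and let $\phi\colon S\to C$ be its algebraic reduction onto an elliptic curve, which is equivariant with respect to $\Aut(S)$. I would set $G'=G\cap\Aut(S)_\phi$. By Lemma~\ref{lemma:primary-Kodaira-Aut} the image of $\Aut(S)$, and hence that of $G$, in $\Aut(C)$ has order at most $6$; since $G'$ is precisely the kernel of the induced homomorphism $G\to\Aut(C)$, it has index at most $6$ in $G$. It therefore suffices to prove that $G'$ is abelian.

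To see this, fix a fiber $F$ of $\phi$; it is an elliptic curve, and it is irreducible and non-multiple. By Lemma~\ref{lemma:faithful-on-fiber} the group $G'$ acts faithfully on $F$, so I may identify $G'$ with a subgroup of $\Aut(F)$. On the other hand, Lemma~\ref{lemma:primary-Kodaira-fiber}, applied to the cyclic group generated by any non-trivial $\gamma\in G'$, shows that $\gamma$ has no fixed point on $S$, and in particular none on the invariant curve $F$. The key observation is then purely a statement about elliptic curves: writing an automorphism of $F$ as $x\mapsto\alpha(x)+t$ with $\alpha$ an automorphism fixing the origin and $t\in F$, a fixed point exists whenever $\alpha\neq\mathrm{id}$, because in that case $\mathrm{id}-\alpha$ is a nonzero isogeny and hence surjective, so $t=(\mathrm{id}-\alpha)(x)$ is solvable. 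Thus every fixed-point-free automorphism of $F$ is a translation, whence all elements of $G'$ are translations and $G'$ is abelian.

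For the secondary case I would pass to the canonical cover $\theta\colon\tilde S\to S$ constructed in the proof of Corollary~\ref{corollary:Kodaira-stabilizer}, where $\tilde S$ is a primary Kodaira surface and $\Aut(\tilde S)\to\Aut(S)$ is surjective. Choosing a finite subgroup $\tilde G\subset\Aut(\tilde S)$ that surjects onto $G$, the already established primary case yields an abelian subgroup $\tilde A\subset\tilde G$ of index at most $6$. Its image in $G$ is a homomorphic image of an abelian group, hence abelian, and it has index at most $6$, since the surjection $\tilde G\to G$ carries the cosets of $\tilde A$ onto those of its image. This produces the desired abelian subgroup of index at most $6$ in $G$.

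I expect the main obstacle to be the crux of the primary case: combining Lemma~\ref{lemma:faithful-on-fiber} and Lemma~\ref{lemma:primary-Kodaira-fiber} to force fixed-point-freeness on a single fiber, and then correctly invoking the structure of $\Aut(F)$ to conclude that fixed-point-free automorphisms of an elliptic curve are translations. Once $G'$ is identified with a group of translations the abelianness and the index bound are immediate, and the secondary case becomes a routine transport of the result along the cover.
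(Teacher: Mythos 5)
Your proposal is correct and follows essentially the same route as the paper: in the primary case it takes $G'=G\cap\Aut(S)_\phi$, bounds its index by $6$ via Lemma~\ref{lemma:primary-Kodaira-Aut}, and combines Lemma~\ref{lemma:faithful-on-fiber} with the fixed-point-freeness from Lemma~\ref{lemma:primary-Kodaira-fiber} to identify $G'$ with a group of translations of a fiber; the secondary case is reduced to the primary one by the canonical cover exactly as in the paper's proof. The only difference is cosmetic: you spell out the elliptic-curve fact that a fixed-point-free automorphism $x\mapsto\alpha(x)+t$ must have $\alpha=\mathrm{id}$ (since $\mathrm{id}-\alpha$ would otherwise be a surjective isogeny), a step the paper compresses into a citation of Corollary~\ref{corollary:Kodaira-stabilizer}.
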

\begin{proof}
First suppose that $S$ is a primary Kodaira surface. Consider the algebraic reduction~\mbox{$\phi\colon S\to C$},
and set $G'=G\cap\Aut(S)_\phi$.
By Lemma~\ref{lemma:primary-Kodaira-Aut} the subgroup $G'\subset G$
has index at most~$6$.
By Corollary~\ref{corollary:Kodaira-stabilizer} the group $G'$ acts by translations on an elliptic curve
that is a fiber of the morphism $\phi$. In particular, this group is abelian.

Now suppose that $S$ is a secondary Kodaira surface. As in the proof of Corollary~\ref{corollary:Kodaira-stabilizer},
there exists a primary Kodaira surface $\tilde{S}$ and a finite subgroup~\mbox{$\tilde{G}\subset\Aut(\tilde{S})$}
with a surjective homomorphism onto $G$.
According to what we proved above, the group $\tilde{G}$ contains an abelian subgroup $\tilde{G}'$ of index at most $6$.
Its image in $G$ will also be an abelian subgroup of index at most~$6$.
\end{proof}

Now we prove a general assertion.

\begin{proof}[Prove of Proposition~\ref{proposition:uniformly-Jordan}]
Let $S$ be a compact complex surface of Kodaira dimension~$0$.
Replace $S$ by its minimal model. Then~\mbox{$\Bim(S)=\Aut(S)$} by Lemma~\ref{lemma:Bir-vs-Aut}.

Let us use the classification of minimal compact complex surfaces
of Kodaira dimension $0$.
If $S$ is either a $K3$ surface or an Enriques surface, then the assertion follows from Corollary~\ref{corollary:K3}.
If $S$ is a complex torus, then the assertion follows from Corollary~\ref{corollary:torus-stabilizer},
since in this case the quotient of the group $\Aut(S)$ by the abelian subgroup acting by translations on $S$
is isomorphic to the stabilizer of (any) point of $S$.
If~$S$ is a Kodaira surface, then the assertion follows from Corollary~\ref{corollary:Kodaira-abelian}.
Finally, if $S$ is a bielliptic surface, then the assertion follows from the explicit
description of the group $\Aut(S)$, see~\mbox{\cite[Table~3.2]{BennettMiranda}}: according to this description
$\Aut(S)$ contains a subgroup of index at most~$24$ isomorphic to the group of points of an elliptic curve.
\end{proof}

\bibliographystyle{ugost2008s}

\end{document}